\documentclass{article}
\usepackage{amsmath,amsxtra,amssymb,amsthm,amsfonts}
\usepackage{bbm}
\usepackage{mathtools}
\usepackage{color,graphicx}
\usepackage[margin=1.05in]{geometry}
\usepackage[dvipsnames]{xcolor}
\usepackage{etoolbox}
\usepackage{url}

\usepackage[linktocpage=true,colorlinks,citecolor=blue,linkcolor=blue,pagebackref]{hyperref}
\usepackage{cleveref}
\usepackage{caption}
\usepackage{subcaption}

\usepackage{pgfplots}
\usepackage{pgf}
\usepackage{tikz}
\usetikzlibrary{patterns}
\usetikzlibrary{arrows.meta}
\usepgfplotslibrary{patchplots} 
\usetikzlibrary{pgfplots.patchplots} 
\pgfplotsset{width=9cm,compat=1.5.1}

\usepackage{enumitem}
\usepackage{tikz-cd}
\usetikzlibrary{positioning}
\usetikzlibrary{calc}


\newcommand\tr{\mathop{\rm tr}\nolimits}

\makeatletter
\newcommand{\xbij}[2][]{%
  \lhook\joinrel
  \ext@arrow 0359\rightarrowfill@ {#1}{#2}%
  \mathrel{\mspace{-15mu}}\rightarrow
}
\makeatother

\usepackage{algorithm,algpseudocode}
\algnewcommand\algorithmicforeach{\textbf{for each}}
\algdef{S}[FOR]{ForEach}[1]{\algorithmicforeach\ #1\ \algorithmicdo}

\algnewcommand{\algorithmicand}{\textbf{ and }}
\algnewcommand{\algorithmicor}{\textbf{ or }}
\algnewcommand{\algorithmicnot}{\textbf{ not }}
\algnewcommand{\algorithmicbreak}{\textbf{break}}
\algnewcommand{\algorithmiccontinue}{\textbf{continue}}
\algnewcommand{\AND}{\algorithmicand}
\algnewcommand{\OR}{\algorithmicor}
\algnewcommand{\NOT}{\algorithmicnot}
\algnewcommand{\BREAK}{\algorithmicbreak}
\algnewcommand{\CONTINUE}{\algorithmiccontinue}

\algrenewcommand\algorithmiccomment[1]{\hfill{\color{gray}\(\triangleright\)
#1}}

\makeatletter
\newcommand{\LeftComment}[1]{%
  \Statex
  {%
    \setlength\leftskip{\ALG@thistlm}%
    \noindent
    \color{gray}\(\triangleright\)\enspace#1\par%
  }%
}

\newcommand{\LeftCommentIndent}[1]{%
  \Statex
  {%
    \setlength\leftskip{\dimexpr\ALG@thistlm+\algorithmicindent}%
    \noindent
    \color{gray}\(\triangleright\)\enspace#1\par%
  }%
}
\makeatother

\def\C{\mathbb{C}}

\def\Z{\mathbb{Z}}

\def\R{\mathbb{R}}

\def\F{\mathbb{F}}

\def\Ein{\mathcal{E}}
\def\K{\mathcal{K}}

\def\e{\mathbf{e}}

\def\u{\mathbf{u}}
\def\vv{\mathbf{v}} 

\def\0{\mathbf{0}}
\def\1{\mathbf{1}}

\numberwithin{equation}{section}
\theoremstyle{plain}

\newtheorem{theorem}{Theorem}[section]
\newtheorem{proposition}[theorem]{Proposition}
\newtheorem{lemma}[theorem]{Lemma}

\newtheorem{remark}[theorem]{Remark}

\newtheorem{algo}[theorem]{Algorithm}

\newtheorem{definition}{Definition}

\begin{document}

\title{
  Efficient spectral bounds on the chromatic number of Hamming,
  Johnson, and Kneser graph powers
}

\author{
  Finn A. Steinke\textsuperscript{1} and Luis M. B.
  Varona\textsuperscript{2,3,4}
}

\maketitle

\begin{abstract}
  We investigate spectral lower bounds on the chromatic number $\chi$
  of Hamming graph powers $H(n, q)^p$, Johnson graph powers $J(n,
  k)^p$, and Kneser graph powers $K(n, k)^p$ providing the first
  computationally feasible nontrivial results. While the classical
  Hoffman bound on $\chi$ can, in principle, be applied to any graph,
  na\"ive computation requires $O(q^{3n})$ time for $H(n, q)^p$ and
  $O(({}_nC_k)^3)$ time for both $J(n, k)^p$ and $K(n, k)^p$. We thus
  express the adjacency eigenvalues of these graphs in terms of
  hypergeometric orthogonal polynomials, exploiting recurrence
  relations that arise to efficiently compute the entire spectra. We
  then apply dynamic programming to compute the Hoffman bounds for
  $H(n, q)^p$, $J(n, k)^p$, and $K(n, k)^p$ in $O(np)$, $O(kp)$, and
  $O(k^2)$ time, respectively.\\\medskip

  \noindent \textbf{Keywords:} chromatic number, spectral graph
  theory, dynamic programming, Hamming graphs, Johnson graphs, Kneser graphs

  \noindent \textbf{MSC2010 Classification:}
  05C15; 
  05C50; 
  90C39  
\end{abstract}

\addtocounter{footnote}{1}
\footnotetext{Department of Mathematics \& Statistics, University of
Guelph, Guelph, ON, Canada N1G 2W1}
\addtocounter{footnote}{1}
\footnotetext{Department of Politics \& International Relations,
Mount Allison University, Sackville, NB, Canada E4L 1E4}
\addtocounter{footnote}{1}
\footnotetext{Department of Economics, Mount Allison University,
Sackville, NB, Canada E4L 1E4}
\addtocounter{footnote}{1}
\footnotetext{Department of Mathematics \& Computer Science, Mount
Allison University, Sackville, NB, Canada E4L 1E4}

\section{Introduction}\label{section:1}

The \emph{chromatic number} $\chi(G)$ of a graph $G$ is the minimum number of colours needed to colour the vertices of $G$ such that no two adjacent vertices share the same colour. Determining chromatic number has widespread applications ranging from scheduling to resource allocation; although exact computation is NP-hard in the number of vertices, spectral methods can provide valuable lower bounds. The well-known Hoffman bound \cite{Hof03} states that $\chi(G) \ge 1 + \frac{\lambda_{\text{max}}}{\lvert \lambda_{\text{min}} \rvert}$, where $\lambda_{\text{max}}$ and $\lambda_{\text{min}}$ are the maximum and minimum eigenvalues, respectively, of $G$'s adjacency matrix. However, finding the adjacency eigenvalues of a graph on $n$ vertices is typically $O(n^3)$, unless the graph possesses additional algebraic structure that can be exploited.

Hamming, Johnson, and Kneser graphs---denoted by $H(n, q)$, $J(n, k)$, and $K(n, k)$, respectively---are significant examples of highly structured, vertex-transitive graphs. Many of these structural properties extend to graph powers of the form $H(n, q)^p$, $J(n, k)^p$, and $K(n, k)^p$, making it easier to compute their respective Hoffman bounds. Studying colourings of these graphs has practical applications in coding theory, error correction, distributed computing, and more---for instance, in models that parametrize processes by $n$ attributes with $q$ values each, $\chi(H(n, q)^p)$ is the minimum number of resources required for conflict-free allocation when processes whose configurations differ in at most $p$ parameters conflict.

To our knowledge, no results on chromatic lower bounds for Hamming, Johnson, or Kneser graph powers exist, with the exception of hypercube powers $Q_n^p = H(n, 2)^p$. \cite{FFR17} conducted the most prominent research in this field to date, establishing an exact expression for the clique size $\omega$ of $Q_n^p$ that is computable in $O(p)$ time; since $\omega \le \chi$ for all graphs, this extends to a lower bound on $\chi(Q_n^p)$. Nevertheless, Johnson and Kneser graph powers remain unexplored, as do Hamming graph powers in the general case. We therefore aim to establish the first computationally feasible chromatic lower bounds for these graphs.

While the aforementioned Hoffman bound can be applied to any graph, general-purpose methods for finding eigenvalues---most prominently singular value decomposition (SVD) and QR decomposition (QRD)---are cubic in complexity. Consequently, na\"ive computation requires $O(q^{3n})$ time for $H(n, q)^p$ (which has $q^n$ vertices) and $O\bigl(\binom{n}{k}^3\bigr)$ time for both $J(n, k)^p$ and $K(n, k)^p$ (which have $\binom{n}{k}$ vertices). To overcome this bottleneck, we seek to exploit properties of corresponding algebraic structures known as association schemes, which we define later in Subsection \ref{subsection:2.3}.

We are thus able to utilize dynamic programming to compute the Hoffman bounds for $H(n, q)^p$, $J(n, k)^p$, and $K(n, k)^p$ in $O(np)$, $O(kp)$, and $O(k^2)$ time, respectively. This represents a dramatic reduction in complexity: for instance, na\"ively bounding $\chi(H(n, q)^p)$ from below via the Hoffman bound would require computing at least the minimum and maximum eigenvalues of a $q^n \times q^n$ matrix using methods like SVD or QRD, whereas Algorithm \ref{algo:3.5} achieves the same result merely by performing $p(2n + 2)$ constant-time array updates. Even for modest parameter values such as $(n, q, p) = (10, 3, 5)$, this amounts to just $110$ $O(1)$ operations rather than computing the spectrum of a $59,\!049 \times 59,\!049$ matrix. Algorithms \ref{algo:4.7} and \ref{algo:4.8} offer similarly appreciable gains over the na\"ive methods for computing the Hoffman bounds for Johnson and Kneser graph powers, respectively.

\subsection{Organization of the paper}\label{subsection:1.1}

We begin in Section \ref{section:2} by introducing the notation used throughout the paper and covering preliminary mathematical definitions. Furthermore, we state established results giving the exact values of $\chi$ for $H(n, q)$ and $K(n, k)$ (but not their powers), then review some known chromatic lower bounds for Johnson graphs $J(n, k)$ and hypercube powers $Q_n^p$. Additionally, we briefly discuss some applications of colourings of Hamming graphs, Johnson graphs, Kneser graphs, and their powers to coding theory, error correction, and distributed computing, lending further credence to the relevance of our results.

In Section \ref{section:3}, we find a closed form for the adjacency eigenvalues of Hamming graph powers using Kravchuk polynomials, then leverage recurrence relations between these polynomials to establish an $O(np)$ dynamic programming algorithm to compute the Hoffman bound for $H(n, q)^p$. In Section \ref{section:4}, we use Eberlein polynomials to develop analogous $O(kp)$ and $O(k^2)$ algorithms to compute the Hoffman bounds for $J(n, k)^p$ and $K(n, k)^p$, respectively.

Finally, we close in Section \ref{section:5} with some open questions and suggestions for future work. In particular, we consider applications of our eigenvalue computation techniques to other spectral bounds and graph invariants, as well as extensions to other highly symmetric, vertex-transitive graph families.

\section{Preliminaries and definitions}\label{section:2}

We use bold lowercase letters like $\u, \vv, \ldots$ to denote vectors in $\F^n$ for any field $\F$ and, more generally, to denote elements of free modules $R^n$ for any ring $R$ (e.g., $\Z_q^n$). We use $\e_i$ to denote the $i$\textsuperscript{th} standard basis vector (i.e., the vector with $1$ in its $i$\textsuperscript{th} entry and $0$ everywhere else) and $\1$ to denote the all-ones vector $(1, 1, \ldots, 1)$. We often write $\mathrm{Spec}(A)$ for the set of eigenvalues of a matrix $A \in \R^{n \times n}$, $[n]$ for the set of integers $\{1, 2, \ldots, n\}$, and $\binom{[n]}{k}$ for the set of all $k$-subsets of $[n]$. We adopt the standard combinatorial convention that $\binom{n}{k} = 0$ whenever $n < k$.

We typically index matrices and vectors from $1$, using $[A]_{i,j}$ to denote the $(i, j)$\textsuperscript{th} entry of the matrix $A$ and $v_i$ to denote the $i$\textsuperscript{th} entry of the vector $\vv$. However, we follow the standard convention of $0$-based indexing when describing data structures and algorithms---for instance, $\texttt{arr}[i]$ denotes the $(i + 1)$\textsuperscript{th} entry of the array $\texttt{arr}$.

Throughout this paper, we exclusively consider unweighted, undirected, finite graphs with no self-loops. Given a graph $G$, we frequently write $V(G)$ for the vertex set of $G$, $E(G)$ for the edge set of $G$, $A(G)$ for the adjacency matrix of $G$, $\mathrm{diam}(G)$ for the diameter of $G$, $\chi(G)$ for the chromatic number of $G$, and $\omega(G)$ for the clique number of $G$. We are also concerned with powers of graphs, which we define below.

\begin{definition}[Graph powers]\label{defn:1}
    The $p$\textsuperscript{th} \emph{power} of a graph $G$, denoted by $G^p$, is a graph with the same vertex set in which two vertices are adjacent if and only if their distance in $G$ is at most $p$; that is,
    \[V(G^p) \coloneqq V(G), \quad E(G^p) \coloneqq \bigl\{\{u, v\} \subseteq V(G) \bigm| 1 \le d_G(u, v) \le p\bigr\},\]
    where $d_G : V(G) \times V(G) \to \Z_{\ge 0}$ denotes the vertex distance function for $G$.
\end{definition}

\begin{remark}\label{remark:2.1}
    It is helpful to observe that given any graph $G$ on $n$ vertices, the power $G^p$ is simply the complete graph $K_n$ on $n$ vertices for all $p \ge \mathrm{diam}(G)$. As such, we only concern ourselves with graph powers $G^p$ for $1 \le p \le \mathrm{diam}(G)$ throughout this paper.
\end{remark}

\subsection{Hamming graphs}\label{subsection:2.1}

We now turn to understanding Hamming graphs, as well as the specific case of the hypercube. (The interested reader may wish to refer to \cite[pp. 261--267]{BCN89} and \cite[pp. 167--168]{HIK11} for further discussion on Hamming graphs. A nice overview of hypercubes specifically is given in \cite[pp. 17--18]{HIK11} as well.) Before defining these graphs, we first introduce their vertex distance function for convenience---the Hamming metric.

\begin{definition}[Hamming metric and distance]\label{defn:2}
    Let $n \ge 1$ and $q \ge 2$, and consider the set $\Z_q^n$ of length-$n$ strings in a $q$-ary alphabet. Equipped with the \emph{Hamming metric} $d_H : \Z_q^n \times \Z_q^n \to \{0, 1, \ldots, n\}$, this forms the metric space $(\Z_q^n, d_H)$, where we define the \emph{Hamming distance} $d_H(\u, \vv)$ between two strings $\u, \vv \in \Z_q^n$ as the number of positions at which they differ:
    \[d_H(\u, \vv) \coloneqq \lvert \{i \in [n] \mid u_i \ne v_i\} \rvert.\]
\end{definition}

We are now in full possession of the terminology needed to give full presentations of Hamming graphs and hypercubes in terms of their vertex and edge sets.

\begin{definition}[Hamming graph]\label{defn:3}
    Let $n \ge 1$ and $q \ge 2$. The \emph{Hamming graph} $H(n, q)$ is the graph whose vertices correspond to all possible $q$-ary strings of length $n$ and in which two vertices are connected if they differ in precisely one position. More formally, the vertex and edge sets of $H(n, q)$ are given by
    \[V(H(n, q)) \coloneqq \Z_q^n, \quad E(H(n, q)) \coloneqq \bigl\{\{\u, \vv\} \subseteq \Z_q^n \bigm| d_H(\u, \vv) = 1\bigr\}.\]
\end{definition}

\begin{definition}[Hypercube]\label{defn:4}
    The \emph{hypercube} is a special case of a Hamming graph, defined as $Q_n \coloneqq H(n, 2)$. An alternative presentation of $Q_n$, independent of the terminology of Hamming graphs, is
    \[V(Q_n) \coloneqq \Z_2^n, \quad E(Q_n) \coloneqq \bigl\{\{\u, \vv\} \subseteq \Z_2^n \bigm| d_H(\u, \vv) = 1\bigr\}.\]
\end{definition}

Note that the diameter of the Hamming graph is $\mathrm{diam}(H(n, q)) = n$ regardless of the value of $q$, since the maximum Hamming distance between any two vertices is $n$ (when their associated $q$-ary strings differ in all positions). As per Remark \ref{remark:2.1}, we therefore only consider Hamming graph powers $H(n, q)^p$ for $p \le n$.

\subsection{Johnson and Kneser graphs}\label{subsection:2.2}

We are also interested in establishing chromatic lower bounds for Johnson graphs, Kneser graphs, and their powers. (Once again, helpful complementary discussion on Johnson graphs may be found in \cite[pp. 255--261]{BCN89}, which also briefly mentions the closely related Kneser graphs. For a more detailed review of Kneser graphs, consult \cite[p. 22]{HIK11}.) We define these graphs below.

\begin{definition}[Johnson graph]\label{defn:5}
    Let $k \ge 1$ and $n \ge k$. The \emph{Johnson graph} $J(n, k)$ is the graph whose vertices correspond to all possible $k$-subsets of $[n]$ and in which two vertices are adjacent if their intersection has cardinality $k - 1$. More formally, the vertex and edge sets of $J(n, k)$ are given by
    \[V(J(n, k)) \coloneqq \binom{[n]}{k}, \quad E(J(n, k)) \coloneqq \left\{\{A, B\} \subseteq \binom{[n]}{k}\, \middle|\, \lvert A \cap B \rvert = k - 1\right\}.\]
\end{definition}

\begin{definition}[Kneser graph]\label{defn:6}
    Let $k \ge 1$ and $n \ge 2k$. The \emph{Kneser graph} $K(n, k)$ is the graph whose vertices correspond to all possible $k$-subsets of $[n]$ and in which two vertices are adjacent if they are disjoint sets. More formally, the vertex and edge sets of $K(n, k)$ are given by
    \[V(K(n, k)) \coloneqq \binom{[n]}{k}, \quad E(K(n, k)) \coloneqq \left\{\{A, B\} \subseteq \binom{[n]}{k}\, \middle|\, A \cap B = \emptyset\right\}.\]
\end{definition}

\begin{remark}\label{remark:2.2}
    The standard definition of the Kneser graph imposes the constraint $n \ge 2k$ because if $n < 2k$, then no two $k$-subsets of $[n]$ are disjoint, and $K(n, k)$ is simply the empty graph with no edges. Moreover, in the $n = 2k$ case, $K(n, k)^p$ is a perfect matching for every $p \ge 1$. Perfect matchings are well known to have a chromatic number of $2$, so we are only interested in investigating the Hoffman bound on $\chi(K(n, k)^p)$ for $n > 2k$, even though $n = 2k$ is a valid case according to the definition.
\end{remark}

It is worth noting that Johnson and Kneser graphs are closely related: $K(n, k)$ is precisely the distance-$k$ graph of $J(n, k)$. That is, $K(n, k)$ and $J(n, k)$ have the same vertex sets, and two vertices in $K(n, k)$ are adjacent if and only if their shortest-path distance in $J(n, k)$ is precisely $k$ (which occurs when the corresponding $k$-subsets of $[n]$ are disjoint). This relationship shall allow us to utilize the same family of HOPs---namely, the Eberlein polynomials---to efficiently compute the Hoffman bounds on both $J(n, k)^p$ and $K(n, k)^p$ in Section \ref{section:4}.

As with the Hamming metric on Hamming graphs, it is straightforward to obtain a closed-form expression for the vertex distance function on $J(n, k)$.

\begin{proposition}\label{prop:2.3}
    Let $k \ge 1$ and $n \ge k$, and suppose that $A, B \in \binom{[n]}{k}$ with $\lvert A \cap B \rvert = x$. The shortest-path distance between $A$ and $B$ in the Johnson graph $J(n, k)$ is then given by
    \[d_{J(n, k)}(A, B) = k - x.\]
\end{proposition}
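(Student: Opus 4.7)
The plan is to prove the identity by a standard two-sided bound, exhibiting a path of length $k-x$ from $A$ to $B$ and then arguing that no shorter walk can exist.

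For the upper bound, I would exploit the fact that $|A \cap B| = x$ forces $|A \setminus B| = |B \setminus A| = k - x$. Enumerate the elements of $A \setminus B$ as $a_1, \ldots, a_{k-x}$ and those of $B \setminus A$ as $b_1, \ldots, b_{k-x}$, and construct the sequence
\[C_0 = A, \quad C_i = (C_{i-1} \setminus \{a_i\}) \cup \{b_i\} \quad \text{for } 1 \le i \le k-x.\]
An immediate induction shows each $C_i$ is a $k$-subset of $[n]$, each consecutive pair $C_{i-1}, C_i$ has intersection of size $k-1$ (hence is an edge of $J(n,k)$), and $C_{k-x} = B$. Therefore $d_{J(n,k)}(A, B) \le k - x$.

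For the matching lower bound, I would consider the function $f : \binom{[n]}{k} \to \Z_{\ge 0}$ defined by $f(C) = |C \cap B|$ and show it is $1$-Lipschitz with respect to the graph metric. Indeed, if $\{C, C'\}$ is an edge of $J(n,k)$, then $|C \cap C'| = k-1$, so $C' = (C \setminus \{c\}) \cup \{c'\}$ for some $c \in C$ and $c' \notin C$. Depending on whether $c, c'$ lie in $B$, the value $f$ changes by at most $1$. Consequently, along any path from $A$ to $B$ of length $\ell$, we must have $|f(B) - f(A)| \le \ell$; since $f(A) = x$ and $f(B) = k$, this gives $\ell \ge k - x$, completing the proof.

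No step poses a real obstacle; the only point worth handling carefully is the case analysis for the Lipschitz bound, which I would package as a short remark that swapping one element in or out of $C$ alters its intersection with the fixed set $B$ by $-1$, $0$, or $+1$ depending on which of the two swapped elements belong to $B$. Combining the two bounds yields $d_{J(n,k)}(A, B) = k - x$ exactly.
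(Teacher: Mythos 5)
Your proof is correct, and it is in fact more rigorous than the paper's own argument. The paper's proof is a one-line assertion that ``a shortest path from $A$ to $B$ corresponds to successively swapping elements of $A \setminus B$ for elements of $B \setminus A$,'' which really only establishes the upper bound $d_{J(n,k)}(A,B) \le k - x$ (the explicit swap sequence you construct as $C_0, \ldots, C_{k-x}$) and leaves the minimality claim implicit. Your lower bound via the $1$-Lipschitz potential $f(C) = \lvert C \cap B \rvert$ --- observing that a single swap changes $\lvert C \cap B\rvert$ by at most $1$, so any walk from $A$ to $B$ must have length at least $f(B) - f(A) = k - x$ --- is exactly the missing half, and your case analysis for the Lipschitz property is sound. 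The one small point worth stating explicitly in the upper-bound construction is that $a_i \in C_{i-1}$ and $b_i \notin C_{i-1}$ (which holds because only $a_1, \ldots, a_{i-1}$ have been removed and only $b_1, \ldots, b_{i-1}$ added by step $i$), ensuring each $C_i$ genuinely has size $k$ and $\lvert C_{i-1} \cap C_i \rvert = k-1$; you gesture at this with ``an immediate induction,'' which is fine. In short: same constructive idea as the paper for the path, plus a clean monotonicity argument the paper omits.
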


\begin{proof}
    Since two vertices in $J(n, k)$ are adjacent if and only if they differ by exactly one element, a shortest path from $A$ to $B$ corresponds to successively swapping elements in $A \setminus B$ for elements in $B \setminus A$. Therefore, we require exactly $k - \lvert A \cap B \rvert$ swaps to reach $B$, as desired.
\end{proof}

Hence, the diameter of the Johnson graph is $\mathrm{diam}(J(n, k)) = \min\{k, n - k\}$, since the maximum distance between any two vertices $A$ and $B$ occurs when $\lvert A \cap B \rvert$ is minimized---either $0$ when $n \ge 2k$ (giving distance $k$), or $2k - n$ when $n < 2k$ (giving distance $n - k$). We thus only consider Johnson graph powers $J(n, k)^p$ for $p \le \min\{k, n - k\}$, adhering to Remark \ref{remark:2.1}.

Characterizing adjacency in Kneser graphs, on the other hand, is not as simple. Although there still exists an explicit $O(1)$ formula for the vertex distance function on $K(n, k)$, it is far more complicated and does not follow as directly from the fundamental definition of the Kneser graph. We state this formula---taken from existing literature---below.

\begin{proposition}\label{prop:2.4}
    Let $k \ge 1$ and $n > 2k$, and suppose that $A, B \in \binom{[n]}{k}$ with $\lvert A \cap B \rvert = x$. The shortest-path distance between $A$ and $B$ in the Kneser graph $K(n, k)$ is then given by
    \[d_{K(n, k)}(A, B) = \min\left\{2{\left\lceil\frac{k - x}{n - 2k}\right\rceil},\, 2{\left\lceil\frac{x}{n - 2k}\right\rceil} + 1\right\}.\]
\end{proposition}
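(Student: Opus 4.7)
The plan is to compute the shortest $A$-to-$B$ walk in $K(n,k)$ separately for even and odd lengths and then take the minimum; the two arguments of the $\min$ in the statement correspond exactly to the shortest even and shortest odd walks.

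First I would record a \emph{two-step reachability} criterion: distinct $k$-subsets $X, Y \in \binom{[n]}{k}$ admit a common Kneser-neighbour if and only if $|[n] \setminus (X \cup Y)| \ge k$, i.e., $|X \cap Y| \ge 3k - n$; when this holds, any $k$-subset of $[n] \setminus (X \cup Y)$ serves as such a neighbour. From this I would prove the key \emph{slow-progress lemma}: for any length-two Kneser walk $A_0, A_1, A_2$ and any fixed $Y \in \binom{[n]}{k}$,
\[
    |A_2 \cap Y| \le |A_0 \cap Y| + (n - 2k).
\]
Its proof decomposes $A_2 \cap Y = (A_2 \cap Y \cap A_0) \sqcup (A_2 \cap Y \setminus A_0)$ and applies the bound $|A_2 \setminus A_0| = k - |A_0 \cap A_2| \le k - (3k - n) = n - 2k$, which follows because $A_0, A_2 \subseteq [n] \setminus A_1$.

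Setting $Y = B$, the slow-progress lemma implies that the potential $\phi(X) \coloneqq \lceil (k - |X \cap B|)/(n-2k) \rceil$ decreases by at most one between $A_{2i}$ and $A_{2i+2}$. Applied to an even walk $A = A_0, A_2, \ldots, A_{2m} = B$, this yields $m \ge \phi(A) - \phi(B) = \lceil (k-x)/(n-2k) \rceil$; an explicit construction---swap up to $n - 2k$ elements of $A_{2i} \setminus B$ for elements of $B \setminus A_{2i}$ in each double step, then fill in $A_{2i+1}$ as any $k$-subset of $[n] \setminus (A_{2i} \cup A_{2i+2})$---attains this bound. For the odd case, any walk of length $2m+1$ splits as an initial Kneser-edge $A \to A_1$ followed by an even walk $A_1 \to B$; since $A_1 \subseteq [n] \setminus A$ one has $|A_1 \cap B| \le k - x$, so the even-length bound forces the total length to be at least $1 + 2\lceil x/(n-2k) \rceil$, with equality attained by choosing $A_1$ disjoint from $A$ and containing all of $B \setminus A$---feasible since $|[n] \setminus A| = n - k \ge k$.

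Taking the minimum of the two lengths yields the stated formula. I expect the main obstacle to be the slow-progress lemma: although it reduces to a short counting argument once the right decomposition is identified, the entire analysis hinges on the tightness of the reachability constraint $|A_0 \cap A_2| \ge 3k - n$ and on the hypothesis $n > 2k$ (which makes the denominator positive and guarantees the constructive steps are realizable). The remaining steps are bookkeeping with the potential $\phi$.
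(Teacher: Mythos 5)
The paper does not actually prove Proposition \ref{prop:2.4}: it defers entirely to the citation \cite{VPV05}, so there is no in-paper argument to compare against. Your proposal, by contrast, is a complete and correct self-contained proof, and it is essentially a reconstruction of the standard Valencia-Pab\'on--Vera argument: split walks by parity, control the growth of $\lvert A_{2i} \cap B \rvert$ across double steps, and match the lower bounds with explicit constructions. The key computations all check out. The two-step reachability criterion is right ($\lvert [n] \setminus (X \cup Y)\rvert = n - 2k + \lvert X \cap Y\rvert \ge k$ iff $\lvert X \cap Y\rvert \ge 3k - n$); the slow-progress lemma follows from $\lvert A_0 \cap A_2\rvert \ge 2k - (n-k) = 3k - n$ since both sets avoid $A_1$; iterating it gives $k \le x + m(n-2k)$ for an even walk of length $2m$, hence $m \ge \lceil (k-x)/(n-2k)\rceil$; and the matching construction is feasible because $\lvert A_{2i} \cup A_{2i+2}\rvert \le n - k$ leaves room for the intermediate vertex. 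The odd case correctly exploits $A_1 \subseteq [n]\setminus A$ to force $\lvert A_1 \cap B\rvert \le k - x$ for the lower bound and chooses $A_1 \supseteq B \setminus A$ (possible since $n - k \ge k$) for the upper bound. The degenerate endpoints ($x = 0$ giving distance $1$, $x = k$ giving distance $0$) are also consistent with the formula. The only stylistic remark is that your even-walk lower bound is cleaner if stated directly as the telescoped inequality $k \le x + m(n-2k)$ rather than through the potential $\phi$, but this is cosmetic. In short: where the paper cites, you prove, and the proof is sound.
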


\begin{proof}
    Given in \cite{VPV05}.
\end{proof}

Another pertinent result from \cite{VPV05} (indeed, their main contribution) is that the diameter of the Kneser graph is precisely $\mathrm{diam}(K(n, k)) = \left\lceil\frac{k - 1}{n - 2k}\right\rceil + 1$. As such, Remark \ref{remark:2.1} again informs us that we need only consider Kneser graph powers $K(n, k)^p$ for $p \le \left\lceil\frac{k - 1}{n - 2k}\right\rceil + 1$.

\subsection{Association schemes}\label{subsection:2.3}

A closely related concept to distance-regular graphs like Hamming and Johnson graphs is that of association schemes, whose properties form the foundation of our methods to efficiently compute Hoffman bounds. (Further discussion of association schemes can be found in \cite{BI84}, which we in fact reference several times throughout the text.) We define these objects below.

\begin{definition}[Association scheme]\label{defn:7}
    An $n$-class association scheme is an ordered pair $\bigl(X, \{R_i\}_{i = 0}^n\bigr)$, where $X$ is a set and $\{R_i\}_{i = 0}^n$ is a partition of $X \times X$ that satisfies the following properties:
    \begin{enumerate}[label=(\roman*)]
        \item $R_0 = \bigl\{(x, x) \bigm| x \in X\bigr\}$
        \item For all $1 \le i \le n$, the inverse relation $\bigl\{(x, y) \bigm| (y, x) \in R_i\bigr\}$ is also in the collection $\{R_i\}_{i = 0}^n$
        \item For all $0 \le i, j, k \le n$, there exists a constant $p_{i,j}^k$ such that for any $(x, y) \in R_k$, the number of elements $z \in X$ satisfying $(x, z) \in R_i$ and $(z, y) \in R_j$ is precisely $p_{i,j}^k$
    \end{enumerate}
\end{definition}

Given an association scheme $\bigl(X, \{R_i\}_{i = 0}^n\bigr)$, each $(X, R_i)$ can be viewed as a graph whose vertex set is $X$ and whose edge set is $R_i$. (If the $R_i$'s are not symmetric, this is a directed graph; we therefore restrict our attention in this paper to association schemes whose relations are all symmetric.) As such, every distance-regular graph $G$ has a corresponding association scheme $\bigl(V(G), \{D_i\}_{i = 0}^{\mathrm{diam}(G)}\bigr)$, where the adjacency matrix $A_i$ of each $(V(G), D_i)$ is precisely the distance-$i$ matrix of $G$. It follows from this that the eigenvalues of the graph power $G^p$ are given by summing the eigenvalues of the $A_i$'s over $i = 1, 2, \ldots, p$.

These adjacency matrices are called the \emph{basis matrices} of the association scheme, and their span is called the \emph{Bose--Mesner algebra} of the association scheme. It is an important property that the basis matrices $\{A_i\}_{i = 0}^n$ of any association scheme are simultaneously diagonalizable with a common eigenspace decomposition $V = V_0 \oplus V_1 \oplus \cdots \oplus V_n$, where $V_0$ is the subspace spanned by the all-ones vector \cite[pp. 58--60]{BI84}. Moreover, basis matrices of association schemes that represent distance-regular graphs always have eigenvalues given by a class of polynomials known as \emph{hypergeometric orthogonal polynomials}, or HOPs. These are orthogonal sequences of polynomials that satisfy recurrence relations, making it efficient to compute successive terms---further details can be found in \cite{KS98}. We can thus use the HOPs of the Hamming and Johnson schemes to compute the eigenvalues of Hamming and Johnson graph powers (and Kneser graph powers as well, by their connection to Johnson graphs), which we shall do in Sections \ref{section:3} and \ref{section:4}, respectively.

\subsection{Known bounds and applications}\label{subsection:2.4}

The exact chromatic numbers of both Hamming graphs and Kneser graphs are well known. Since $H(n, q)$ is the Cartesian product of $n$ complete graphs on $q$ vertices, and the chromatic number of a Cartesian product equals the maximum chromatic number among its factors, it is straightforward to see that $\chi(H(n, q)) = q$ for all $n \ge 1$ and $q \ge 2$. Additionally, the foundational work of \cite{Lov78} proved the long-standing conjecture that $\chi(K(n, k)) = n - 2k + 2$ for all $k \ge 1$ and $n \ge 2k$. On the other hand, the exact value of $\chi(J(n, k))$ remains an open problem, but the clique number $\omega(J(n, k)) = \max\{k + 1, n - k + 1\}$ provides an elementary lower bound on $\chi(J(n, k))$. In fact, by writing out the well-established minimum and maximum eigenvalues of $A(J(n, k))$ \cite{Vor20}, it is easy to see that $\omega(J(n, k))$ is always equal to the Hoffman bound for $J(n, k)$. Therefore, our methods of computing the Hoffman bounds for $H(n, q)$, $J(n, k)$, and $K(n, k)$ provide no improvement over existing results, but we continue to include the $p = 1$ case---as well as the $p = \mathrm{diam}(G)$ case, where $\chi(G^p) = \chi(K_{\lvert V(G) \rvert}) = \lvert V(G) \rvert$---in all our results for the sake of generality.

In contrast, our results on graph powers of the form $H(n, q)^p$, $J(n, k)^p$, and $K(n, k)^p$ remain novel. The only existing chromatic lower bounds for powers of Hamming, Johnson, or Kneser graphs---as opposed to just the graphs themselves---regard hypercube powers specifically. The work of \cite{FFR17} is the most relevant here---in addition to lower bounds on another invariant known as the $b$-chromatic number, they determined the exact value of the clique number of hypercube powers:
\[\omega(Q_n^p) = \begin{cases}
    \sum_{i = 0}^{\frac{p}{2}} \binom{n}{i} \quad & \text{if } p \text{ is even} \\[6pt]
    \sum_{i = 0}^{\frac{p - 1}{2}} \binom{n - 1}{i} \quad & \text{if } p \text{ is odd.}
\end{cases}\]
Since $\omega$ bounds $\chi$ from below, this extends to the bound
\[\chi(Q_n^p) \ge \begin{cases}
    \sum_{i = 0}^{\frac{p}{2}} \binom{n}{i} \quad & \text{if } p \text{ is even} \\[6pt]
    \sum_{i = 0}^{\frac{p - 1}{2}} \binom{n - 1}{i} \quad & \text{if } p \text{ is odd,}
\end{cases}\]
which numerical results indicate is typically slightly tighter than our bound from Theorem \ref{thm:3.3} for hypercube powers. Using the well-known recurrence relation $\binom{n}{i} = \frac{n - i + 1}{i}\binom{n}{i - 1}$, it is straightforward to compute each $\binom{n}{i}$ in $O(1)$ from the value of $\binom{n}{i - 1}$, so computing this bound requires $O(p)$ time total---also slightly more efficient than our $O(np)$ approach. However, the present work still represents the first nontrivial, computationally feasible chromatic lower bounds for general Hamming graph powers and for any Johnson or Kneser graph powers.

These lower bounds have several concrete applications in coding theory, error correction, and distributed computing. For instance, $H(n, q)^p$ is the conflict graph for models whose processes are parametrized by $n$ parameters each taking one of $q$ values and in which conflicts occur between configurations differing in at most $p$ parameters. Since the chromatic number of a conflict graph equals the minimum number of resources needed for conflict-free allocation \cite{BNBH+98}, our lower bounds on $\chi(H(n, q)^p)$ can help establish resource requirements for distributed systems with this conflict structure.

Connections between colourings of Johnson graphs and coding theory are well known---for instance, $\chi(J(n, k))$ is precisely the minimum number of partitions needed to divide all binary words of length $n$ and weight $k$ into constant-weight codes with minimum Hamming distance $4$ \cite{EB96}. Finding tighter bounds on $\chi(J(n, k))$---whose exact value is not known---directly enables the improved construction of such codes, as demonstrated by \cite{BE11}. It is straightforward to generalize this to Johnson graph powers---$\chi(J(n, k)^p)$ is equal to the minimum number of partitions into constant-weight codes with minimum Hamming distance $2p + 2$---thus establishing the relevance of chromatic lower bounds for $J(n, k)^p$ as well.

Applications of Kneser graphs and their powers are less commonly studied than those of Hamming and Johnson graphs, but they remain relevant objects of study in several applied fields like coding theory (see, for instance, \cite{CHMv24}). While the exact value of $\chi(K(n, k))$ is well established, no computationally feasible bounds yet exist for $\chi(K(n, k)^p)$ despite the chromatic number being a fundamental graph invariant with diverse applications. The present work attempts to address this gap in the literature.

\section{Hoffman bound for Hamming graph powers}\label{section:3}

Our efficient approach to computing the Hoffman bound for $H(n, q)^p$ relies first and foremost on having a closed form for the eigenvalues of $A(H(n, q)^p)$. To this end, we first introduce the Hamming association scheme, which we shall soon see is closely linked to Hamming graphs and their spectra.

\begin{definition}[Hamming association scheme]\label{defn:8}
    The $q$-ary $n$-class \emph{Hamming association scheme} is the ordered pair $\Bigl(\Z_q^n, \bigl\{H_i^{(q)}\bigr\}_{i = 0}^n\Bigr)$, where
    \[H_i^{(q)} \coloneqq \bigl\{(\u, \vv) \in \Z_q^n \times \Z_q^n \bigm| d_H(\u, \vv) = i\bigr\}.\]
\end{definition}

Observe that we can write the Bose--Mesner algebra $\mathcal{A} \subseteq \R^{q^n \times q^n}$ of this scheme as $\mathcal{A} = \mathrm{span}_\R\bigl\{A_i^{(q)}\bigr\}_{i = 0}^n$, where the basis matrices $A_i^{(q)} \in \R^{q^n \times q^n}$ are defined by
\[\bigl[A_i^{(q)}\bigr]_{\u,\vv} \coloneqq \begin{cases}
    1 & \text{if } (\u, \vv) \in H_i^{(q)} \\
    0 & \text{if } (\u, \vv) \notin H_i^{(q)}.
\end{cases}\]
A final mathematical construction we require is the concept of Kravchuk polynomials---a family of HOPs closely related to the Hamming scheme.

\begin{definition}[Kravchuk polynomials \cite{Kra29}]\label{defn:9}
    Let $n, q \ge 1$. For all $0 \le i \le n$, we define the $i$\textsuperscript{th} \emph{Kravchuk polynomial} of $x$ with respect to $n$ and $q$ as
    \[\K_i(x; n, q) \coloneqq \sum_{j = 0}^i (-1)^j (q - 1)^{i - j} \binom{x}{j} \binom{n - x}{i - j}.\]
\end{definition}

We are now in possession of all the tools necessary to prove the following result about the spectrum of $A(H(n, q)^p)$.

\begin{lemma}\label{lemma:3.1}
    For all $n \ge 1$, $q \ge 2$, and $1 \le p \le \mathrm{diam}(H(n, q)) = n$, the maximum eigenvalue of the adjacency matrix $A(H(n, q)^p)$ is given by
    \[\sum_{i = 1}^p (q - 1)^i \binom{n}{i}\]
    and the other $n$ eigenvalues by
    \[\left\{\sum_{i = 1}^p \K_i(k; n, q)\, \middle|\, 1 \le k \le n\right\}.\]
    Moreover, the minimum eigenvalue of $A(H(n, q)^p)$ is negative.
\end{lemma}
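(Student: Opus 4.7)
The plan is to identify $A(H(n,q)^p)$ as a sum of basis matrices of the Hamming scheme and then leverage the scheme's simultaneous diagonalization. From Definition \ref{defn:1} and Definition \ref{defn:2}, two vertices $\u, \vv \in \Z_q^n$ are adjacent in $H(n,q)^p$ exactly when $1 \le d_H(\u, \vv) \le p$, so we may write $A(H(n,q)^p) = \sum_{i=1}^p A_i^{(q)}$. By the discussion in Subsection \ref{subsection:2.3}, the basis matrices $\bigl\{A_i^{(q)}\bigr\}_{i=0}^n$ share a common eigenspace decomposition $\R^{q^n} = V_0 \oplus V_1 \oplus \cdots \oplus V_n$, so summing them produces a matrix whose eigenvalue on each $V_k$ is exactly the sum of the individual eigenvalues of the $A_i^{(q)}$'s on $V_k$.

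I would then invoke the classical fact that the eigenvalue of $A_i^{(q)}$ on the eigenspace $V_k$ is precisely $\K_i(k; n, q)$ (cf.\ \cite{BI84,KS98}), yielding the $n+1$ eigenvalues $\sum_{i=1}^p \K_i(k; n, q)$ of $A(H(n,q)^p)$ indexed by $0 \le k \le n$. To identify the $k=0$ case as the maximum, note that $V_0$ is spanned by $\1$ and that $H(n,q)^p$ is regular: each vertex has exactly $\sum_{i=1}^p (q-1)^i \binom{n}{i}$ neighbours, namely the count of $q$-ary strings within Hamming distance $p$ of a fixed string. This matches direct substitution into Definition \ref{defn:9}, which gives $\K_i(0; n, q) = (q-1)^i \binom{n}{i}$ since every term with $j \ge 1$ vanishes due to $\binom{0}{j} = 0$. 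By Perron--Frobenius, this regular degree is the largest eigenvalue of $A(H(n,q)^p)$, so the remaining $n$ eigenvalue values are precisely those indexed by $1 \le k \le n$.

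For the final claim, I would appeal to a trace argument: the adjacency matrix of any loopless graph has zero diagonal, so the eigenvalues of $A(H(n,q)^p)$ sum to $0$. Since $H(n,q)^p$ has at least one edge for all $n \ge 1$, $q \ge 2$, $p \ge 1$, its maximum eigenvalue is strictly positive, forcing at least one of the remaining eigenvalues to be strictly negative. The only step that is not purely algebraic is invoking the identification between the eigenspaces of the Hamming scheme and the Kravchuk polynomial values on those eigenspaces; once that standard fact is in hand, the rest of the argument reduces to a short computation at $k=0$, Perron--Frobenius, and a trace observation.
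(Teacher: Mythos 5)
Your proposal is correct and follows essentially the same route as the paper: decompose $A(H(n,q)^p)$ as $\sum_{i=1}^p A_i^{(q)}$, use the common eigenspace decomposition of the Hamming scheme with the Kravchuk-polynomial eigenvalues, identify the $k=0$ eigenvalue as the Perron eigenvalue, and conclude negativity of the minimum eigenvalue from the zero trace. The only (welcome) addition is your explicit verification that $\K_i(0;n,q) = (q-1)^i\binom{n}{i}$ coincides with the regular degree, which the paper leaves implicit.
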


\begin{proof}
    Suppose that $n \ge 1$, $q \ge 2$, and $1 \le p \le n$, and consider the basis matrices $\bigl\{A_i^{(q)}\bigr\}_{i = 0}^n$ of the Hamming scheme $\Bigl(\Z_q^n, \bigl\{H_i^{(q)}\bigr\}_{i = 0}^n\Bigr)$. By definition, $A_i^{(q)}$ is the distance-$i$ matrix of the Hamming graph $H(n, q)$, so the adjacency matrix of $H(n, q)^p$ can be expressed as a sum of these basis matrices:
    \[A(H(n, q)^p) = \sum_{i = 1}^p A_i^{(q)}.\]
    Now let $V = V_0 \oplus V_1 \oplus \cdots \oplus V_n$ be a common eigenspace decomposition of the $A_i^{(q)}$'s, where $V_0$ is spanned by the all-ones vector. From \cite[pp. 209--210]{BI84}, we also know that the eigenvalue $\lambda_k$ of $A_i^{(q)}$ on the eigenspace $V_k$ is given by evaluating the Kravchuk polynomial $\K_i(k; n, q)$; it then follows immediately that the $n + 1$ distinct eigenvalues of $A(H(n, q)^p)$ are
    \[\mathrm{Spec}(A(H(n, q)^p)) = \left\{\sum_{i = 1}^p \K_i(k; n, q)\, \middle|\, 0 \le k \le n\right\}.\]
    Since $V_0 = \langle \1 \rangle$ as noted above, applying the Perron--Frobenius theorem to $A(H(n, q)^p)$ tells us that the associated eigenvalue
    \[\lambda_0 = \sum_{i = 1}^p \K_i(0; n, q) = \sum_{i = 1}^p (q - 1)^i \binom{n}{i}\]
    must be the largest.

    Now observe that since the $A_i^{(q)}$'s are real symmetric matrices with all-zero diagonals, the following constraint must hold:
    \[\tr\bigl(A(H(n, q)^p)\bigr) = \sum_{i = 1}^p \tr\bigl(A_i^{(q)}\bigr) = 0.\]
    Since our expression for $\lambda_0$ necessarily evaluates to a positive value and the trace equals the sum of all eigenvalues (with multiplicities), the minimum eigenvalue of $A(H(n, q)^p)$ must be negative to satisfy the trace requirement, and we are done.
\end{proof}

\begin{remark}\label{remark:3.2}
Alternatively, one can prove Lemma \ref{lemma:3.1} via Fourier analysis on the group $\Z_q^n$. The basis matrices $A_i^{(q)}$ of the Hamming scheme act by convolution on the set of all $\Z_q^n \to \C$ functions, so they are simultaneously diagonalized by the character basis of $\Z_q^n$. Although we omit the details here for the sake of brevity, it is then straightforward to see that the corresponding eigenvalues are given by evaluating Kravchuk polynomials, precisely as in the original proof of Lemma \ref{lemma:3.1}. While this approach certainly offers additional intuition, we opt instead to work in terms of association schemes throughout the paper, which extends more naturally to our exploration of Johnson and Kneser graphs in Section \ref{section:4}. (We briefly discuss further avenues of representation-theoretic and harmonic-analytical investigation later in Section \ref{section:5}.)
\end{remark}

From here, we quickly obtain a closed form for the Hoffman bound for $H(n, q)^p$, presented below.

\begin{theorem}\label{thm:3.3}
    For all $n \ge 1$, $q \ge 2$, and $1 \le p \le \mathrm{diam}(H(n, q)) = n$,
    \[\chi(H(n, q)^p) \geq 1 + \dfrac{\sum_{i = 1}^p (q - 1)^i \binom{n}{i}}{-\min\limits_{1 \le k \le n}\left\{\sum_{i = 1}^p \K_i(k; n, q)\right\}}.\]
\end{theorem}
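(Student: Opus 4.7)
The plan is to obtain the bound as a direct application of the classical Hoffman inequality $\chi(G)\ge 1+\lambda_{\max}/|\lambda_{\min}|$ to $G=H(n,q)^p$, feeding in the spectral description already packaged by Lemma \ref{lemma:3.1}. The entire theorem is essentially a bookkeeping corollary of that lemma, so I expect no real technical obstacle; what remains is to confirm that the right quantities can be read off in the claimed form.

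First I would recall from Lemma \ref{lemma:3.1} that the maximum eigenvalue of $A(H(n,q)^p)$ is $\lambda_0 = \sum_{i=1}^p (q-1)^i\binom{n}{i}$, furnishing the numerator of the bound directly. The remaining eigenvalues are indexed by $1\le k\le n$ and given by $\sum_{i=1}^p \K_i(k;n,q)$, so the minimum eigenvalue of $A(H(n,q)^p)$ equals
\[
\lambda_{\min} \;=\; \min_{1\le k\le n}\left\{\sum_{i=1}^p \K_i(k;n,q)\right\},
\]
provided $\lambda_0$ itself does not achieve the minimum. This exclusion is immediate: $\lambda_0$ is a sum of strictly positive terms (since $q\ge 2$ and $p\ge 1$), whereas Lemma \ref{lemma:3.1} asserts $\lambda_{\min}<0$; thus $\lambda_0 \ne \lambda_{\min}$ and the minimum is indeed realized in the range $k\in\{1,\dots,n\}$.

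Next, because $\lambda_{\min}<0$, I have $|\lambda_{\min}| = -\lambda_{\min} = -\min_{1\le k\le n}\bigl\{\sum_{i=1}^p \K_i(k;n,q)\bigr\}$, which is strictly positive and therefore makes the denominator of the Hoffman bound well-defined. Substituting both quantities into Hoffman's inequality yields the stated lower bound on $\chi(H(n,q)^p)$, completing the argument.

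The only subtlety worth flagging is the nonvanishing of the denominator, which is exactly what the negativity clause of Lemma \ref{lemma:3.1} was designed to guarantee; without it, the proof would need a separate argument ruling out $\lambda_{\min}=0$. Given that this has already been handled upstream, the present proof is short and mechanical, and the main work of the section lies in the spectral identification carried out in Lemma \ref{lemma:3.1} rather than in Theorem \ref{thm:3.3} itself.
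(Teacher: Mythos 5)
Your proposal is correct and follows exactly the same route as the paper, which simply cites Lemma \ref{lemma:3.1} together with Hoffman's inequality; your additional remarks about why the minimum is attained for $k \in \{1, \dots, n\}$ and why the denominator is nonzero are sound elaborations of details the paper leaves implicit.
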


\begin{proof}
    Follows directly from Lemma \ref{lemma:3.1} and the Hoffman bound
    \[\chi(G) \ge 1 + \frac{\max\mathrm{Spec}(A(G))}{\lvert \min\mathrm{Spec}(A(G)) \rvert}\]
    from \cite{Hof03}.
\end{proof}

Even without further optimization, this closed form already enables significant improvements over performing SVD or QRD to find the eigenvalues of $A(H(n, q)^p) \in \R^{q^n \times q^n}$---both methods are $O(q^{3n})$. Specifically, na\"ive computation of the Hoffman bound in this manner requires $O(np^2)$ time: each eigenvalue is expressed as a double sum with $p$ outer iterations and up to $p + 1$ inner iterations. Although the inner term sums are not $O(1)$ to compute from scratch, we can invoke the same recurrence relation $\binom{n}{i} = \frac{n - i + 1}{i}\binom{n}{i - 1}$ from Subsection \ref{subsection:2.4} to derive successive binomial coefficients in constant time. Similarly, each power $(q - 1)^{i - j}$ can be computed from $(q - 1)^{i - j + 1}$ by a single division. This lets us compute each inner term in $O(1)$, so each eigenvalue requires $O(p \cdot (p + 1) \cdot 1) = O(p^2)$ time total. To find the minimum eigenvalue in the denominator, we compute $n$ eigenvalues total, resulting in $O(np^2)$ complexity overall. This approach also allocates no extra memory, requiring only $O(1)$ space.

Still, this time complexity of $O(np^2)$ becomes less feasible as we take higher powers of the Hamming graph. To obtain an alternative algorithm that requires only $O(np)$ time---the central result of this section---we thus identify and prove the following recurrence relations between Kravchuk polynomials.

\begin{lemma}\label{lemma:3.4}
    For all $1 \le i \le p$, the following recurrence relations between Kravchuk polynomials hold:
    \begin{enumerate}[label=(\roman*)]
        \item $\displaystyle \K_i(0; n, q) = \frac{(n - i + 1)(q - 1)}{i} \cdot \K_{i - 1}(0; n, q)$
        \item $\displaystyle \K_i(k; n, q) = \frac{(n - k - i + 1)(q - 1)}{i} \cdot \K_{i - 1}(k; n, q) - \frac{k}{i} \cdot \K_{i - 1}(k - 1; n, q)$ for all $k \ge 1$
    \end{enumerate}
\end{lemma}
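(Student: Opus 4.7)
The plan is to handle (i) directly from the definition and to prove (ii) by coefficient matching after an index shift.

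For (i), since $\binom{0}{j} = 0$ for every $j \geq 1$, only the $j = 0$ term in Definition~\ref{defn:9} survives when we set $x = 0$, yielding the closed form $\K_i(0; n, q) = (q - 1)^i \binom{n}{i}$. The recurrence then follows immediately from the elementary identity $\binom{n}{i} = \frac{n - i + 1}{i}\binom{n}{i - 1}$ after factoring out one power of $(q - 1)$.

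For (ii), I would expand both sides using Definition~\ref{defn:9} and compare coefficients of $(-1)^m (q - 1)^{i - m}$ for each $m$. First I would rewrite the second term on the RHS, namely $-k\,\K_{i - 1}(k - 1; n, q)$, by the substitution $m \mapsto m - 1$ in its defining sum, so that the exponent of $(q - 1)$ aligns with that in the other sum; applying the absorption identity $k\binom{k - 1}{m - 1} = m\binom{k}{m}$ then produces a sum in which every summand carries a factor of $\binom{k}{m}$. After this alignment, the full identity (modulo the common factor $(-1)^m(q - 1)^{i - m}\binom{k}{m}$, with boundary terms $m = 0$ and $m = i$ treated separately) reduces to verifying
\[
(n - k - i + 1)\binom{n - k}{i - 1 - m} + m\binom{n - k + 1}{i - m} = i\binom{n - k}{i - m}
\]
for every $0 \leq m \leq i$.

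I would prove this last identity by applying Pascal's rule, $\binom{n - k + 1}{i - m} = \binom{n - k}{i - m} + \binom{n - k}{i - m - 1}$, and collecting terms, which reduces the claim to the single-variable absorption identity $(i - m)\binom{n - k}{i - m} = (n - k - i + m + 1)\binom{n - k}{i - m - 1}$, a standard factorial manipulation. The main obstacle is the bookkeeping: getting the index shift and sign tracking right, and making sure the boundary cases $m = 0$ and $m = i$ (where one of the two RHS sums contributes nothing) agree with the LHS, which I would verify by direct substitution. As an alternative derivation, one can differentiate the generating function $\sum_{i \geq 0} \K_i(k; n, q)\, z^i = (1 + (q - 1)z)^{n - k}(1 - z)^k$ and re-expand the result in Kravchuk form, but the combinatorial route above seems cleanest and generalizes more transparently to the Eberlein case of Section~\ref{section:4}.
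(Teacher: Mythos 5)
Your proposal is correct and follows essentially the same route as the paper's proof: both reduce the recurrence to a termwise binomial identity via the absorption identities $m\binom{k}{m} = k\binom{k-1}{m-1}$ and $r\binom{N}{r} = (N-r+1)\binom{N}{r-1}$ together with Pascal's rule, the only difference being that you expand the right-hand side and match coefficients of $(-1)^m(q-1)^{i-m}\binom{k}{m}$, whereas the paper runs the same manipulation forward starting from $i \cdot \K_i(k; n, q)$ with the split $i = (i - j) + j$. Your key identity $(n - k - i + 1)\binom{n-k}{i-1-m} + m\binom{n-k+1}{i-m} = i\binom{n-k}{i-m}$ and the handling of part (i) both check out.
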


\begin{proof}
    Fix $n \ge 1$ and $1 \le p \le n$, and suppose that $1 \le i \le p$ and $0 \le k \le n$. Multiplying the Kravchuk polynomial $\K_i(k; n, q)$ by $i$ gives us
    \begin{align*}
        i \cdot \K_i(k; n, q) & = ((i - j) + j) \sum_{j = 0}^i (-1)^j (q - 1)^{i - j} \binom{k}{j} \binom{n - k}{i - j} \\
        & = \sum_{j = 0}^i (-1)^j (q - 1)^{i - j} \binom{k}{j} (i - j) \binom{n - k}{i - j} + \sum_{j = 0}^i (-1)^j (q - 1)^{i - j} j \binom{k}{j} \binom{n - k}{i - j}.
    \end{align*}
    When $j = i$, the expression inside the first sum evaluates to zero, and when $j = 0$, the expression inside the second sum evaluates to zero. Therefore, we can simplify this further to
    \begin{align*}
        i \cdot \K_i(k; n, q) = \sum_{j = 0}^{i - 1} (-1)^j (q - 1)^{i - j} \binom{k}{j} (i - j) \binom{n - k}{i - j} + \sum_{j = 1}^i (-1)^j (q - 1)^{i - j} j \binom{k}{j} \binom{n - k}{i - j}.
    \end{align*}
    We can rearrange the well-known recurrence relation $\binom{a}{b} = \frac{(a - b + 1)}{b}\binom{a}{b - 1}$ to obtain the useful identity $b\binom{a}{b} = (a - b + 1)\binom{a}{b - 1}$. Setting $a = n - k$ and $b = i - j$, we notice that $i \cdot \K_i(k; n, q)$ then becomes
    \begin{align*}
        i \cdot \K_i(k; n, q) ={} & \sum_{j = 0}^{i - 1} (-1)^j (q - 1)^{i - j} \binom{k}{j} (n - k - i + j + 1) \binom{n - k}{i - j - 1} \\
        & + \sum_{j = 1}^i (-1)^j (q - 1)^{i - j} j \binom{k}{j} \binom{n - k}{i - j} \\
        ={} & (n - k - i + 1)(q - 1) \cdot \K_{i - 1}(k; n, q) \\
        & + \sum_{j = 0}^{i - 1} (-1)^j (q - 1)^{i - j} j \binom{k}{j} \binom{n - k}{i - j - 1} + \sum_{j = 1}^i (-1)^j (q - 1)^{i - j} j \binom{k}{j} \binom{n - k}{i - j}.
    \end{align*}
    If $k = 0$, then $\binom{k}{j} = 0$ and the two summations both evaluate to $0$, so dividing both sides by $i$ summarily results in the statement of Lemma \ref{lemma:3.4}(i). With this out of the way, we now consider the case that $k \ge 1$ toward proving Lemma \ref{lemma:3.4}(ii).

    Certainly, when $j = 0$, we have $(-1)^j (q - 1)^{i - j} j \binom{k}{j} \binom{n - k}{i - j - 1} = 0$, so we can simplify further to
    \begin{align*}
        i \cdot \K_i(k; n, q) ={} & (n - k - i + 1)(q - 1) \cdot \K_{i - 1}(k; n, q) \\
        & + \sum_{j = 1}^{i - 1} (-1)^j (q - 1)^{i - j} j \binom{k}{j} \binom{n - k}{i - j - 1} + \sum_{j = 1}^i (-1)^j (q - 1)^{i - j} j \binom{k}{j} \binom{n - k}{i - j}.
    \end{align*}
    By the absorption identity, $j\binom{k}{j} = k\binom{k - 1}{j - 1}$; since we have required $k \ge 1$, this quantity is well-defined. Plugging this in and reindexing with $l = j - 1$ then gives us
    \begin{align*}
        i \cdot \K_i(k; n, q) ={} & (n - k - i + 1)(q - 1) \cdot \K_{i - 1}(k; n, q) \\
        & - k\Biggl(\sum_{l = 0}^{i - 2} (-1)^l (q - 1)^{(i - 1) - l} \binom{k - 1}{l} \binom{n - k}{(i - 1) - l - 1} \\
        & \phantom{\; - k\Biggl(} + \sum_{l = 0}^{i - 1} (-1)^l (q - 1)^{(i - 1) - l} \binom{k - 1}{l} \binom{n - k}{(i - 1) - l}\Biggr).
    \end{align*}
    Isolating the $l = i - 1$ case in the second summation and combining the two resulting summations, we have
    \begin{align*}
        i \cdot \K_i(k; n, q) ={} & (n - k - i + 1)(q - 1) \cdot \K_{i - 1}(k; n, q) \\
        & - k\Biggl(\sum_{l = 0}^{i - 2} (-1)^l (q - 1)^{(i - 1) - l} \binom{k - 1}{l} {\left(\binom{n - k}{(i - 1) - l - 1} + \binom{n - k}{(i - 1) - l}\right)} \\
        & \phantom{\; - k\Biggl(} + (-1)^{i - 1} \binom{k - 1}{i - 1}\Biggr).
    \end{align*}
    We can then apply Pascal's identity $\binom{a}{b} = \binom{a - 1}{b - 1} + \binom{a - 1}{b}$, setting $a = n - k$ and $b = (i - 1) - l$, to obtain
    \begin{align*}
        i \cdot \K_i(k; n, q) ={} & (n - k - i + 1)(q - 1) \cdot \K_{i - 1}(k; n, q) \\
        & - k{\left(\sum_{l = 0}^{i - 2} (-1)^l (q - 1)^{(i - 1) - l} \binom{k - 1}{l} \binom{n - k + 1}{(i - 1) - l} + (-1)^{i - 1} \binom{k - 1}{i - 1}\right)}.
    \end{align*}
    Since both $(q - 1)^{(i - 1) - l}$ and $\binom{n - k + 1}{(i - 1) - l}$ evaluate to $1$ when $l = i - 1$, we can fold the isolated $(-1)^{i - 1} \binom{k - 1}{i - 1}$ term back into the sum to get
    \begin{align*}
        i \cdot \K_i(k; n, q) ={} & (n - k - i + 1)(q - 1) \cdot \K_{i - 1}(k; n, q) \\
        & - k{\left(\sum_{l = 0}^{i - 1} (-1)^l (q - 1)^{(i - 1) - l} \binom{k - 1}{l} \binom{n - k + 1}{(i - 1) - l}\right)} \\
        ={} & (n - k - i + 1)(q - 1) \cdot \K_{i - 1}(k; n, q) - k \cdot \K_{i - 1}({k - 1}; n, q).
    \end{align*}
    Finally, dividing both sides by $i$ yields the statement of Lemma \ref{lemma:3.4}(ii), thus completing the proof.
\end{proof}

This relationship allows us to iterate over successive values of $i = 1, 2, \ldots, p$ and dynamically compute the Kravchuk polynomial $\K_i(k; n, q)$, then sum these values up to obtain the $\lambda_k$'s. In particular, we maintain two arrays of length $n + 1$, called \texttt{krav} and \texttt{lambda}. \texttt{krav} stores the current iteration's values of $\K_i(k; n, q)$ for all $k = 0, 1, \ldots, n$ simultaneously, while \texttt{lambda} accumulates the partial sums $\sum_{j = 1}^i \K_j(k; n, q)$ after each iteration to build up the eigenvalues $\lambda_k = \sum_{i = 1}^p \K_i(k; n, q)$ (as per Lemma \ref{lemma:3.1}). Note that for all $k$, the base case is
\begin{align*}
     \K_0(k; n, q) = \sum_{j = 0}^0 (-1)^j (q - 1)^{0 - j} \binom{k}{j} \binom{n - k}{0 - j} = 1,
\end{align*}
so we initialize all entries of \texttt{krav} to $1$ before the first iteration.

When updating \texttt{krav} from iteration $i - 1$ to iteration $i$, we process positions in descending order from $k = n$ to $k = 1$, since computing $\K_i(k; n, q)$ via Lemma \ref{lemma:3.4}(ii) requires knowing not only $\K_{i - 1}(k; n, q)$ but also $\K_{i - 1}(k - 1; n, q)$ from the previous iteration. This ensures that when computing the new value $\texttt{krav}[k]$, $\texttt{krav}[k - 1]$ still contains its old value from iteration $i - 1$. (Of course, computing each $\K_i(0; n, q)$ via Lemma \ref{lemma:3.4}(i) is far simpler, as it depends solely on the value in the same $\texttt{krav}[0]$ position from the previous iteration.)

We formally present this dynamic programming approach below to conclude Section \ref{section:3}, alongside a thorough analysis of the algorithm's complexity.

\begin{algo}\label{algo:3.5}
    Let $n \ge 1$, $q \ge 2$, and $1 \le p \le \mathrm{diam}(H(n, q)) = n$. We can compute the Hoffman bound for the Hamming graph power $H(n, q)^p$ as follows:
    {
        \normalfont
        \begin{enumerate}
            \item Initialize an array, \texttt{krav}, of size $n + 1$ with all entries set to $1$.
            \item Initialize an array, \texttt{lambda}, of size $n + 1$ with all entries set to $0$.
            \item For each $i = 1, 2, \ldots, p$:
                \begin{enumerate}
                    \item For each $k = n, n - 1, \ldots, 1$:\\
                        \hspace*{\algorithmicindent}Set $\texttt{krav}[k] \gets \frac{(n - k - i + 1)(q - 1)}{i} \cdot \texttt{krav}[k] - \frac{k}{i} \cdot \texttt{krav}[k - 1]$.
                    \item Set $\texttt{krav}[0] \gets \frac{(n - i + 1)(q - 1)}{i} \cdot \texttt{krav}[0]$.
                    \item For each $k = 0, 1, \ldots, n$:\\
                        \hspace*{\algorithmicindent}Set $\texttt{lambda}[k] \gets \texttt{lambda}[k] + \texttt{krav}[k]$.
                \end{enumerate}
            \item Let $\lambda_{\text{max}} \gets \texttt{lambda}[0]$.
            \item Let $\lambda_{\text{min}} \gets \min\limits_{1 \le k \le n} \texttt{lambda}[k]$.
            \item Return $1 + \frac{\lambda_{\text{max}}}{-\lambda_{\text{min}}}$ as a lower bound on $\chi(H(n, q)^p)$.
        \end{enumerate}
    }
    This algorithm requires $O(np)$ time and $O(n)$ space.
\end{algo}

\begin{proof}
    Correctness follows immediately from Lemmas \ref{lemma:3.1} and \ref{lemma:3.4}, as explained in the paragraphs immediately preceding the algorithm. We now justify the claimed time and space complexities. (Note that we assume constant-time arithmetic and array access; if exact arithmetic with growing word-size is intended, complexity should be measured in bit operations instead.)

    It is straightforward to see that the space complexity of this algorithm is $O(n)$, as we allocate two arrays of size $n + 1$ each in Steps 1 and 2. With regard to time, the dominant cost lies in Step $3$, which contains an outer loop that executes $p$ times. In each iteration of this loop, Step 3(a) performs $n$ constant-time updates to the \texttt{krav} array, Step 3(b) performs one more update to \texttt{krav}, and Step 3(c) performs $n + 1$ updates to \texttt{lambda}. Across all $p$ iterations of Step 3, this amounts to $p(2n + 2) = O(np)$ constant-time operations, as noted in the concrete example provided toward the end of Section \ref{section:1}. Certainly, this dominates the $O(n)$ costs of initializing the \texttt{krav} and \texttt{lambda} arrays (Steps 1 and 2) and of computing $\lambda_{\text{min}}$ (Step 5), so we see that the overall time complexity is $O(np)$, as desired.
\end{proof}

Notably, both the time and space complexities of Algorithm \ref{algo:3.5} are independent of $q$, so it is no more expensive to compute the Hoffman bound for arbitrary $H(n, q)^p$ than for $Q_n^p$, regardless of the value of $q$. Although our $O(np)$ time approach is still slightly less efficient than \cite{FFR17}'s lower bound on $\chi(Q_n^p)$---which, as mentioned in Subsection \ref{subsection:2.4}, is computable in $O(p)$ time instead---it generalizes from hypercube powers to Hamming graph powers, representing the first practical lower bound on $\chi(H(n, q)^p)$ in general.

\section{Hoffman bounds for Johnson and Kneser graph powers}\label{section:4}

Similarly to what we did with Hamming graph powers in Section \ref{section:3}, we now work toward efficient computation of the Hoffman bound for Johnson and Kneser graph powers. As noted in Subsection \ref{subsection:2.2}, the Kneser graph $K(n, k)$ is precisely the distance-$k$ graph of the Johnson graph $J(n, k)$, so these two graphs share fundamental structural similarities despite their different adjacency conditions. Both can be viewed through the lens of the Johnson scheme and its associated family of HOPs just as we used the Hamming scheme to analyze Hamming graph powers; we introduce this association scheme now.

\begin{definition}[Johnson association scheme]\label{defn:10}
    The $n$-element $k$-class \emph{Johnson association scheme} is the ordered pair $\Bigl(\binom{[n]}{k}, \bigl\{J_i^{(n)}\bigr\}_{i = 0}^k\Bigr)$, where
    \[J_i^{(n)} \coloneqq \left\{(A, B) \in \binom{[n]}{k} \times \binom{[n]}{k}\, \middle|\, \lvert A \cap B \rvert = k - i\right\}.\]
\end{definition}

As we did for the Hamming scheme in Section \ref{section:3}, we express the Bose--Mesner algebra $\mathcal{B} \subseteq \R^{\binom{n}{k} \times \binom{n}{k}}$ of the Johnson scheme as the span $\mathcal{B} = \mathrm{span}_\R\bigl\{B_i^{(n)}\bigr\}_{i = 0}^k$, where the basis matrices $B_i^{(n)} \in \R^{\binom{n}{k} \times \binom{n}{k}}$ are defined by
\[\bigl[B_i^{(n)}\bigr]_{A,B} \coloneqq \begin{cases}
    1 & \text{if } (A, B) \in J_i^{(n)} \\
    0 & \text{if } (A, B) \notin J_i^{(n)}.
\end{cases}\]
Observe that $B_i^{(n)}$ is precisely the distance-$i$ matrix of the Johnson graph $J(n, k)$---as such, $A(J(n, k)^p)$ is simply the sum of the first $p$ $B_i^{(n)}$'s. On the other hand, since $K(n, k)$ is the distance-$k$ graph of $J(n, k)$, $A(K(n, k)^p)$ is precisely the sum of all $B_i^{(n)}$'s such that distance $i$ in $J(n, k)$ maps to a distance between $1$ and $p$ in $K(n, k)$ (we shall formalize this concept later in this section).

To use these basis matrices to compute the eigenvalues of $A(K(n, k)^p)$ and $A(J(n, k)^p)$ efficiently, we leverage the fact that the $B_i^{(n)}$'s share a common eigenspace decomposition. Toward this, we first define the Eberlein polynomials.

\begin{definition}[Eberlein polynomials \cite{Del73}]\label{defn:11}
    Let $k \ge 1$ and $n \ge k$. For all $0 \le i \le k$, we define the $i$\textsuperscript{th} \emph{Eberlein polynomial} of $x$ with respect to $n$ and $k$ as
    \[\Ein_i(x; n, k) \coloneqq \sum_{t = 0}^i (-1)^t \binom{x}{t} \binom{k - x}{i - t} \binom{n - k - x}{i - t}.\]
\end{definition}

These polynomials play the same role for the Johnson association scheme as the Kravchuk polynomials do for the Hamming scheme. Specifically, the eigenvalues of each basis matrix $B_i^{(n)}$ of the Johnson scheme $\Bigl(\binom{[n]}{k}, \bigl\{J_i^{(n)}\bigr\}_{i = 0}^k\Bigr)$ are given by evaluating the Eberlein polynomials $\Ein_i(x; n, k)$ for $0 \le i \le k$ \cite[p. 220]{BI84}.

The path to establishing the adjacency spectrum of $J(n, k)^p$ is already clear---we shall simply follow a similar strategy as we did in Lemma \ref{lemma:3.1}. However, establishing the adjacency spectrum of $K(n, k)^p$ requires us to first characterize which Johnson distances up to $k$ correspond to which Kneser distances, so we can express $A(K(n, k)^p)$ as a sum of the $B_i^{(n)}$'s. (We only consider Johnson distances up to $k$ because Kneser graphs $K(n, k)$ require $n > 2k$, in which case we know from Subsection \ref{subsection:2.2} that $\mathrm{diam}(J(n, k)) = \min\{k, n - k\} = k$.) We thus formalize this relationship using the Kneser graph distance formula from Proposition \ref{prop:2.4}.

\begin{lemma}\label{lemma:4.1}
    Let $k \ge 1$ and $n > 2k$, and define the function $\delta_{n,k} : \{0, 1, \ldots, k\} \to \Z_{\ge 0}$ by
    \[\delta_{n,k}(i) \coloneqq \min\left\{2{\left\lceil\frac{i}{n - 2k}\right\rceil},\, 2{\left\lceil\frac{k - i}{n - 2k}\right\rceil} + 1\right\}.\]
    Then for all Johnson/Kneser graph vertices $A, B \in \binom{[n]}{k}$, the relationship between distances in $J(n, k)$ and $K(n, k)$ is given by $d_{K(n, k)}(A, B) = \delta_{n,k}(d_{J(n, k)}(A, B))$.
\end{lemma}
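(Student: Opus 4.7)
The statement is essentially a direct substitution from the two distance formulas the paper has already recorded, so my plan is to reduce it to a one-line algebraic identity rather than do anything substantive. The one small subtlety to handle up front is that the domain of $\delta_{n,k}$ is $\{0,1,\ldots,k\}$, so I need to check that the Johnson distance $d_{J(n,k)}(A,B)$ actually lies in this range. Since the hypothesis $n>2k$ forces $\min\{k,n-k\}=k$, Proposition \ref{prop:2.3} (and the diameter computation immediately after it) gives $0 \le d_{J(n,k)}(A,B)\le \mathrm{diam}(J(n,k))=k$, so $\delta_{n,k}\bigl(d_{J(n,k)}(A,B)\bigr)$ is well defined.

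The main step is then to fix $A,B\in\binom{[n]}{k}$, set $x\coloneqq\lvert A\cap B\rvert$ and $i\coloneqq d_{J(n,k)}(A,B)$, and apply Proposition \ref{prop:2.3} to conclude $i=k-x$, i.e.\ $x=k-i$. Substituting this into the Kneser distance formula of Proposition \ref{prop:2.4} gives
\[
d_{K(n,k)}(A,B)=\min\!\left\{2\!\left\lceil\frac{k-x}{n-2k}\right\rceil,\,2\!\left\lceil\frac{x}{n-2k}\right\rceil+1\right\}=\min\!\left\{2\!\left\lceil\frac{i}{n-2k}\right\rceil,\,2\!\left\lceil\frac{k-i}{n-2k}\right\rceil+1\right\},
\]
which is exactly $\delta_{n,k}(i)=\delta_{n,k}\bigl(d_{J(n,k)}(A,B)\bigr)$, as required.

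There is essentially no obstacle here, since both distance formulas are imported wholesale (Proposition \ref{prop:2.3} was proved in the excerpt, and Proposition \ref{prop:2.4} is cited from \cite{VPV05}); the work has already been done and the lemma is a notational repackaging of the two statements in a form that is convenient for the later spectral analysis. The only thing worth emphasizing in the writeup is the role of the $n>2k$ hypothesis, both for the validity of invoking Proposition \ref{prop:2.4} in the first place and for ensuring that $k-x$ and $x$ both lie in $\{0,1,\ldots,k\}$ so that the two ceiling terms are nonnegative and $\delta_{n,k}$ makes sense as a function into $\Z_{\ge 0}$.
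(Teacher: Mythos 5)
Your proposal is correct and is essentially identical to the paper's own proof: both fix $A, B$ with $\lvert A \cap B\rvert = x$, invoke Proposition \ref{prop:2.3} to write $i = k - x$, and substitute into the formula of Proposition \ref{prop:2.4} to recover $\delta_{n,k}(i)$. The only addition is your (harmless and trivially verified) remark that $d_{J(n,k)}(A,B)$ lies in the domain $\{0, 1, \ldots, k\}$ of $\delta_{n,k}$, which the paper leaves implicit.
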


\begin{proof}
    Fix $k \ge 1$ and $n > 2k$, and suppose that $A, B \in \binom{[n]}{k}$ are Johnson/Kneser graph vertices with $\lvert A \cap B \rvert = x$. By Proposition \ref{prop:2.3}, we know that $d_{J(n, k)}(A, B) = k - x$; substituting $i = k - x$ into the formula for $d_{K(n, k)}$ from Proposition \ref{prop:2.4} then gives us
    \[d_{K(n, k)}(A, B) = \min\left\{2{\left\lceil\frac{i}{n - 2k}\right\rceil},\, 2{\left\lceil\frac{k - i}{n - 2k}\right\rceil} + 1\right\} = \delta_{n,k}(i) = \delta_{n,k}(d_{J(n, k)}(A, B)),\]
    as desired.
\end{proof}

Having established the distance mapping $\delta_{n,k}$, we are now ready to characterize the adjacency spectra of both $J(n, k)^p$ and $K(n, k)^p$ in these terms.

\begin{lemma}\label{lemma:4.2}
    For all $k \ge 1$, the following results on the adjacency spectra of Johnson and Kneser graph powers hold:
    \begin{enumerate}[label=(\roman*)]
        \item For all $n \ge k$ and $1 \le p \le \mathrm{diam}(J(n, k)) = \min\{k, n - k\}$, the maximum eigenvalue of the adjacency matrix $A(J(n, k)^p)$ is given by
        \[\sum_{i = 1}^p \binom{k}{i} \binom{n - k}{i}\]
        and the other $k$ eigenvalues by
        \[\left\{\sum_{i = 1}^p \Ein_i(j; n, k)\, \middle|\, 1 \le j \le k\right\}.\]
        \item For all $n > 2k$ and $1 \le p \le \mathrm{diam}(K(n, k)) = \left\lceil\frac{k - 1}{n - 2k}\right\rceil + 1$, the maximum eigenvalue of the adjacency matrix $A(K(n, k)^p)$ is given by
        \[\sum_{i \in \delta_{n,k}^{-1}([p])} \binom{k}{i} \binom{n - k}{i}\]
        and the other $k$ eigenvalues by
        \[\left\{\sum_{i \in \delta_{n,k}^{-1}([p])} \Ein_i(j; n, k)\, \middle|\, 1 \le j \le k\right\},\]
        where $\delta_{n,k}$ is as defined in Lemma \ref{lemma:4.1} (and thus the preimage $\delta_{n,k}^{-1}([p])$ is the set of all distances in $J(n, k)$ that map to a distance in $K(n, k)$ between $1$ and $p$).
    \end{enumerate}
    Moreover, the minimum eigenvalues of both classes of graph powers are always negative.
\end{lemma}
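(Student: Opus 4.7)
The plan is to mirror the strategy used for Lemma \ref{lemma:3.1}, leveraging the fact that the basis matrices of the Johnson scheme are simultaneously diagonalizable with Eberlein polynomials giving their eigenvalues on a common decomposition $V = V_0 \oplus V_1 \oplus \cdots \oplus V_k$, with $V_0 = \langle \1 \rangle$. For part (i), I would first observe that since $B_i^{(n)}$ is by construction the distance-$i$ matrix of $J(n,k)$, the edge structure of $J(n,k)^p$ directly gives $A(J(n,k)^p) = \sum_{i=1}^p B_i^{(n)}$. Invoking the spectral result from \cite[p.~220]{BI84} that each $B_i^{(n)}$ acts on $V_j$ as $\Ein_i(j;n,k)$, summing over $i = 1, \ldots, p$ yields the stated $k+1$ distinct eigenvalues. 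The Perron--Frobenius theorem applied to the nonnegative matrix $A(J(n,k)^p)$ (whose Perron eigenvector is $\1 \in V_0$) identifies $\sum_{i=1}^p \Ein_i(0;n,k)$ as the maximum eigenvalue. A quick computation: plugging $x = 0$ into the definition of the Eberlein polynomials leaves only the $t = 0$ term, giving $\Ein_i(0;n,k) = \binom{k}{i}\binom{n-k}{i}$, matching the claimed formula.

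For part (ii), the new ingredient is translating the Kneser adjacency condition into the language of the Johnson scheme using Lemma \ref{lemma:4.1}. For any $A, B \in \binom{[n]}{k}$, we have $\{A,B\} \in E(K(n,k)^p)$ if and only if $1 \le d_{K(n,k)}(A,B) \le p$, which by Lemma \ref{lemma:4.1} is equivalent to $d_{J(n,k)}(A,B) \in \delta_{n,k}^{-1}([p])$. Consequently,
\[A(K(n,k)^p) = \sum_{i \in \delta_{n,k}^{-1}([p])} B_i^{(n)}.\]
The same common eigenspace argument applies verbatim: the eigenvalues on $V_j$ are $\sum_{i \in \delta_{n,k}^{-1}([p])} \Ein_i(j;n,k)$, and Perron--Frobenius again singles out $j = 0$ as the location of the maximum, which simplifies to $\sum_{i \in \delta_{n,k}^{-1}([p])}\binom{k}{i}\binom{n-k}{i}$ by the same $\Ein_i(0;n,k)$ computation.

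Finally, for the negativity of the minimum eigenvalue in both cases, I would repeat the trace argument from Lemma \ref{lemma:3.1}. Each $B_i^{(n)}$ with $i \ge 1$ has zero diagonal (since $(A,A) \in J_0^{(n)}$ alone), and in part (ii) the indices appearing in the sum satisfy $i \ge 1$ because $\delta_{n,k}(0) = 0 \notin [p]$. Hence $\tr(A(J(n,k)^p)) = \tr(A(K(n,k)^p)) = 0$, but the Perron eigenvalue (a positive sum of positive binomial products) is strictly positive, forcing at least one strictly negative eigenvalue to balance the trace.

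The main obstacle is the bookkeeping in part (ii): one must carefully verify that the preimage $\delta_{n,k}^{-1}([p])$ correctly enumerates precisely those Johnson distances contributing edges to $K(n,k)^p$, without double-counting or omissions, and check that the distance mapping $\delta_{n,k}$ behaves well enough at the extremes (e.g., ensuring $0 \notin \delta_{n,k}^{-1}([p])$ so that $B_0^{(n)} = I$ is excluded and the trace argument still applies). Once Lemma \ref{lemma:4.1} is in hand, however, this reduces to a pointwise check and the spectral decomposition flows through cleanly.
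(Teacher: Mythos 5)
Your proposal is correct and follows essentially the same route as the paper: expressing $A(J(n,k)^p)$ and $A(K(n,k)^p)$ as sums of the Johnson scheme's basis matrices (via Lemma \ref{lemma:4.1} for the Kneser case), reading off eigenvalues from the common eigenspace decomposition using the Eberlein polynomials, and applying Perron--Frobenius plus the zero-trace argument for the extremal eigenvalues. The extra details you supply---the explicit $t = 0$ computation of $\Ein_i(0; n, k)$ and the check that $0 \notin \delta_{n,k}^{-1}([p])$ so the identity matrix is excluded---are correct and only make the argument more self-contained than the paper's.
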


\begin{proof}
    We first prove part (i). To this end, suppose that $k \ge 1$, $n \ge k$, and $1 \le p \le \min\{k, n - k\}$. Since $B_i^{(n)}$ is the distance-$i$ matrix of $J(n, k)$, we know that
    \[A(J(n, k)^p) = \sum_{i = 1}^p B_i^{(n)}.\]
    Next, consider any common eigenspace decomposition $V = V_0 \oplus V_1 \oplus \cdots \oplus V_k$ of the $B_i^{(n)}$'s, where $V_0$ is spanned by the all-ones vector. The eigenvalue $\lambda_j$ of $B_i^{(n)}$ on the eigenspace $V_j$ is given by the Eberlein polynomial $\Ein_i(j; n, k)$ \cite[p. 220]{BI84}; since $A(J(n, k)^p)$ is a sum of these matrices, its eigenvalues are thus sums of the corresponding Eberlein polynomials. We therefore see that the $k + 1$ distinct eigenvalues of $A(J(n, k)^p)$ are
    \[\mathrm{Spec}(A(J(n, k)^p)) = \left\{\sum_{i = 1}^p \Ein_i(j; n, k)\, \middle|\, 0 \le j \le k\right\}.\]
     By an argument nearly identical to the one presented in the proof of Lemma \ref{lemma:3.1}, it is straightforward to see that the eigenvalue
    \[\lambda_0 = \sum_{i = 1}^p \Ein_i(0; n, k) = \sum_{i = 1}^p \binom{k}{i} \binom{n - k}{i}\]
    is the largest and that the minimum eigenvalue must be negative to satisfy trace constraints on $A(J(n, k)^p)$, as desired.

    We now see that the proof of part (ii) follows quickly. Choose some new $n > 2k$ and $1 \le p \le \left\lceil\frac{k - 1}{n - 2k}\right\rceil + 1$, and note that since $K(n, k)$ is the distance-$k$ graph of $J(n, k)$, Lemma \ref{lemma:4.1} tells us immediately that
    \[A(K(n, k)^p) = \sum_{i \in \delta_{n,k}^{-1}([p])} B_i^{(n)}.\]
    The remainder of the proof proceeds identically to part (i)---except summing over $i \in \delta_{n,k}^{-1}([p])$ instead of $i \in [p]$---and we are done.
\end{proof}

\begin{remark}\label{remark:4.3}
    Since $J(n, k)$ is distance-transitive, it is actually fairly straightforward to derive its adjacency spectrum simply by looking at its automorphism group. In particular, \cite[pp. 255--256]{BCN89} provide a concise proof that the $k + 1$ distinct eigenvalues of $A(J(n, k))$ are given by $\lambda_j = (k - j)(n - k - j) - j$ with multiplicities $\mu_j = \binom{n}{j} - \binom{n}{j - 1}$, taken over $j = 0, 1, \ldots, k$. (Indeed, each $\Ein_1(j; n, k)$ evaluates precisely to $\lambda_j$ as defined above, matching our findings from Lemma \ref{lemma:4.2}(i) in the $p = 1$ case.) However, it is not easy to generalize this construction to powers of Johnson graphs (which are not, in general, distance-transitive), hence our need to invoke properties of the Johnson association scheme instead.
\end{remark}

\begin{remark}\label{remark:4.4}
    We note further that the preimage $\delta_{n,k}^{-1}([p])$ need not necessarily form a contiguous interval; for instance, direct computation shows that $\delta_{n,k}^{-1}([p]) = \{1, 4\}$ for $(n, k, p) = (9, 4, 2)$. Nevertheless, this presents no algorithmic difficulty---we can compute $\delta_{n,k}^{-1}([p])$ in $O(k)$ time simply by checking whether $1 \le \delta_{n,k}(i) \le p$ for each relevant Johnson distance $i \in \{0, 1, \ldots, k\}$. (In fact, it is easy to see that we always have $\delta_{n,k}(0) = 0$ and $\delta_{n,k}(k) = 1$, so we can automatically exclude $0$ and include $k$ as a micro-optimization.)
\end{remark}

Notwithstanding the slightly lengthier exposition required compared to Section \ref{section:3}, we have firmly secured all the prerequisites needed to obtain closed forms for the Hoffman bounds on $J(n, k)^p$ and $K(n, k)^p$, which we now present.

\begin{theorem}\label{thm:4.5}
    For all $k \ge 1$, the following chromatic lower bounds for Johnson and Kneser graph powers hold:
    \begin{enumerate}[label=(\roman*)]
        \item For all $n \ge k$ and $1 \le p \le \mathrm{diam}(J(n, k)) = \min\{k, n - k\}$,
        \[\chi(J(n, k)^p) \geq 1 + \dfrac{\sum_{i = 1}^p \binom{k}{i} \binom{n - k}{i}}{-\min\limits_{1 \le j \le k}\left\{\sum_{i = 1}^p \Ein_i(j; n, k)\right\}}.\]
        \item For all $n > 2k$ and $1 \le p \le \mathrm{diam}(K(n, k)) = \left\lceil\frac{k - 1}{n - 2k}\right\rceil + 1$,
        \[\chi(K(n, k)^p) \geq 1 + \dfrac{\sum_{i \in \delta_{n,k}^{-1}([p])} \binom{k}{i} \binom{n - k}{i}}{-\min\limits_{1 \le j \le k}\left\{\sum_{i \in \delta_{n,k}^{-1}([p])} \Ein_i(j; n, k)\right\}}.\]
    \end{enumerate}
\end{theorem}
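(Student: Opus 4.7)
The plan is to derive both parts as immediate corollaries of Lemma \ref{lemma:4.2} combined with the Hoffman bound
\[\chi(G) \ge 1 + \frac{\max\mathrm{Spec}(A(G))}{\lvert \min\mathrm{Spec}(A(G)) \rvert}\]
from \cite{Hof03}, following the same pattern as the proof of Theorem \ref{thm:3.3}. All the substantive spectral work has already been done in Lemmas \ref{lemma:4.1} and \ref{lemma:4.2}, so the proof should essentially amount to reading off the relevant maxima and minima from the explicit adjacency spectra.

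For part (i), I would invoke Lemma \ref{lemma:4.2}(i) to identify $\lambda_{\max} = \sum_{i=1}^p \binom{k}{i}\binom{n-k}{i}$ and to enumerate the remaining $k$ eigenvalues as $\sum_{i=1}^p \Ein_i(j; n, k)$ for $1 \le j \le k$. The lemma additionally guarantees that the minimum eigenvalue of $A(J(n,k)^p)$ is negative, which means (a) the Hoffman bound is well-defined (no division by zero) and (b) since $\lambda_0 > 0$ is the largest, the minimum must be attained on one of the eigenspaces indexed by $1 \le j \le k$. Thus $\lvert \lambda_{\min} \rvert = -\min_{1 \le j \le k}\sum_{i=1}^p \Ein_i(j; n, k)$, and substituting into the Hoffman inequality yields the stated bound.

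Part (ii) proceeds identically using Lemma \ref{lemma:4.2}(ii), with the single change that sums over $i \in [p]$ are replaced by sums over $i \in \delta_{n,k}^{-1}([p])$ to account for the nontrivial distance correspondence between $J(n,k)$ and $K(n,k)$ established in Lemma \ref{lemma:4.1}. Once again, the maximum is $\lambda_0$, the minimum is negative and is achieved on one of the $k$ non-trivial eigenspaces, and the Hoffman bound delivers the claimed inequality.

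I do not anticipate any real obstacle, since the entire argument is bookkeeping: the hard work---identifying $A(J(n,k)^p)$ and $A(K(n,k)^p)$ as sums of Johnson-scheme basis matrices, invoking the Eberlein-polynomial evaluation of their common eigenvalues, and verifying negativity of the minimum via the trace condition---is already encapsulated in Lemma \ref{lemma:4.2}. The only minor point worth flagging explicitly is that, in part (ii), one should confirm that $\delta_{n,k}^{-1}([p])$ is nonempty for the considered range of $p$, which follows from $\delta_{n,k}(k) = 1$ as noted in Remark \ref{remark:4.4}; this ensures the numerator is strictly positive and the bound is nontrivial.
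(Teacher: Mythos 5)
Your proposal is correct and takes essentially the same route as the paper, which likewise derives both parts immediately from Lemma \ref{lemma:4.2} together with the Hoffman bound, in direct analogy with Theorem \ref{thm:3.3}. Your extra observations---that the negativity of the minimum eigenvalue guarantees well-definedness and forces the minimum onto the eigenspaces indexed by $1 \le j \le k$, and that $\delta_{n,k}^{-1}([p])$ is nonempty since $\delta_{n,k}(k) = 1$---are sound elaborations of details the paper leaves implicit.
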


\begin{proof}
    Analogously to Theorem \ref{thm:3.3}, the statement follows directly from Lemma \ref{lemma:4.2} and \cite{Hof03}'s classical chromatic lower bound.
\end{proof}

As with Theorem \ref{thm:3.3} and Hamming graph powers, these closed forms represent significant improvements over na\"ive eigenvalue computation with SVD or QRD. However, na\"ively computing these bounds still incurs substantial costs. For $J(n, k)^p$, we must sum $p$ Eberlein polynomials to compute each of the $k + 1$ eigenvalues, with each $\Ein_i(j; n, k)$ itself taking $O(i)$ time to compute even with standard binomial recurrences, resulting in $O(kp^2)$ time total. For $K(n, k)^p$, the situation is similarly challenging: since $\bigl\lvert \delta^{-1}([p]) \bigr\rvert$ can grow as large as $O(k)$ and thus each Eberlein polynomial evaluation costs $O(i)$ for $i$ as large as $k$, computing all $k + 1$ eigenvalues requires $O(k^3)$ time overall. To obtain more efficient $O(kp)$ and $O(k^2)$ algorithms for Johnson and Kneser graph powers, respectively, we therefore cite a well-known recurrence relation between Eberlein polynomials (in particular, one slightly more complex than the relations between Kravchuk polynomials in Lemma \ref{lemma:3.4}, involving three terms instead of two).

\begin{lemma}\label{lemma:4.6}
    Let $k \ge 1$ and $n \ge k$, and for all $1 \le i \le k$ and $0 \le j \le k$, define the quantities $\alpha_i \coloneqq (n - k - i + 1)(k - i + 1)$ and $\beta_j \coloneqq j(n - j + 1)$. Then the following recurrence relations between Eberlein polynomials hold:
    \begin{enumerate}[label=(\roman*)]
        \item $\displaystyle \Ein_i(0; n, k) = \frac{\alpha_i}{i^2} \cdot \Ein_{i - 1}(0; n, k)$ for all $1 \le i \le k$
        \item $\displaystyle \Ein_i(j; n, k) = \frac{\alpha_i - \beta_j + (i - 1)^2}{i^2} \cdot \Ein_{i - 1}(j; n, k) - \frac{\alpha_{i - 1}}{i^2} \cdot \Ein_{i - 2}(j; n, k)$ for all $0 \le j \le k$ and $2 \le i \le k$
    \end{enumerate}
\end{lemma}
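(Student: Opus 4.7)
My plan is to handle the two parts separately: (i) reduces to a direct binomial computation, and (ii) is most cleanly read off the three-term recurrence that the basis matrices of any distance-regular graph satisfy.

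For part (i), I would first observe that in
\[\Ein_i(0;n,k) = \sum_{t=0}^i (-1)^t \binom{0}{t}\binom{k}{i-t}\binom{n-k}{i-t},\]
the factor $\binom{0}{t}$ vanishes for all $t \ge 1$, so only the $t=0$ term survives and $\Ein_i(0;n,k) = \binom{k}{i}\binom{n-k}{i}$. Applying the identity $\binom{m}{i} = \frac{m-i+1}{i}\binom{m}{i-1}$ once with $m = k$ and once with $m = n-k$ then produces the claimed ratio $(k-i+1)(n-k-i+1)/i^2 = \alpha_i/i^2$ between $\Ein_i(0;n,k)$ and $\Ein_{i-1}(0;n,k)$, settling (i).

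For part (ii), rather than grinding through the defining sum, I would exploit the distance-regular structure of $J(n,k)$. Its intersection numbers are $c_i = i^2$, $b_i = (k-i)(n-k-i)$, and $a_i = k(n-k) - b_i - c_i = i(n-2i)$, so the basis matrices of the Johnson scheme obey the standard three-term identity
\[B_1^{(n)}\, B_{i-1}^{(n)} = c_i\, B_i^{(n)} + a_{i-1}\, B_{i-1}^{(n)} + b_{i-2}\, B_{i-2}^{(n)}.\]
Reading this off on the $j$-th common eigenspace $V_j$, on which each $B_m^{(n)}$ acts as the scalar $\Ein_m(j;n,k)$, and using $\Ein_1(j;n,k) = (k-j)(n-k-j) - j$ from Remark \ref{remark:4.3}, I obtain
\[\Ein_i(j;n,k) = \frac{\Ein_1(j;n,k) - a_{i-1}}{i^2}\, \Ein_{i-1}(j;n,k) - \frac{b_{i-2}}{i^2}\, \Ein_{i-2}(j;n,k).\]
The remaining task is bookkeeping: expanding $\alpha_i = (k-i+1)(n-k-i+1) = b_{i-1}$ and $\beta_j = j(n-j+1) = k(n-k) - \Ein_1(j;n,k)$, then collecting terms, shows $b_{i-2} = \alpha_{i-1}$ and $\Ein_1(j;n,k) - a_{i-1} = \alpha_i - \beta_j + (i-1)^2$, matching the claimed coefficients exactly.

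The main obstacle is precisely this bookkeeping: four index-shifted quantities ($\alpha_i$, $\alpha_{i-1}$, $a_{i-1}$, $b_{i-2}$) must be tracked through the simplification, and it is easy to slip up on a sign or an index shift. A purely combinatorial derivation mirroring Lemma \ref{lemma:3.4} is also feasible---one would split $i^2 \cdot \Ein_i(j;n,k)$ as in the Kravchuk case and repeatedly apply absorption, Pascal's identity, and the binomial recurrence---but this is considerably more tedious because of the extra binomial factor in the Eberlein polynomial, which is exactly why the paper opts to cite the relation as a known identity for the dual Hahn / Eberlein family rather than prove it from scratch.
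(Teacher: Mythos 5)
Your proposal is correct, but it takes a genuinely different route: the paper does not prove Lemma \ref{lemma:4.6} at all---it simply cites the equivalent three-term recurrence for dual Hahn polynomials from \cite[p.~35]{KS98}---whereas you supply a self-contained derivation. Part (i) is exactly right: only the $t = 0$ term of the defining sum survives at $x = 0$, and two applications of $\binom{m}{i} = \frac{m - i + 1}{i}\binom{m}{i - 1}$ give the ratio $\alpha_i / i^2$. For part (ii), your use of the distance-regular product identity $B_1^{(n)} B_{i-1}^{(n)} = c_i B_i^{(n)} + a_{i-1} B_{i-1}^{(n)} + b_{i-2} B_{i-2}^{(n)}$ with the Johnson intersection numbers $c_i = i^2$, $b_i = (k - i)(n - k - i)$, $a_i = i(n - 2i)$ is sound, and the bookkeeping does close up: $b_{i-1} = \alpha_i$, $b_{i-2} = \alpha_{i-1}$, $\beta_j = k(n - k) - \Ein_1(j; n, k)$, and $a_{i-1} = k(n - k) - \alpha_i - (i - 1)^2$, so $\Ein_1(j; n, k) - a_{i-1} = \alpha_i - \beta_j + (i - 1)^2$ as claimed. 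What your approach buys is transparency---it exposes that the recurrence is nothing more than the tridiagonal action of $B_1^{(n)}$ on the Bose--Mesner algebra read off on each common eigenspace, which fits naturally with the association-scheme framework the paper already uses; what the citation buys is brevity and the avoidance of one caveat in your argument, namely that the matrix identity is only guaranteed for $i$ up to $\mathrm{diam}(J(n, k)) = \min\{k, n - k\}$, so if $n - k < k$ the cases $\min\{k, n-k\} < i \le k$ of the stated recurrence need a separate (degenerate, since $B_i^{(n)} = 0$ there) justification or an appeal to the fact that the relation is a polynomial identity. Since Algorithms \ref{algo:4.7} and \ref{algo:4.8} only ever invoke the recurrence within the diameter range, this caveat is harmless, but you should flag it if you present the argument in full.
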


\begin{proof}
    An equivalent formulation of these relations is given in \cite[p. 35]{KS98} (where the Eberlein polynomials are referred to instead as ``dual Hahn'' polynomials).
\end{proof}

Whereas the recurrence relations between Kravchuk polynomials given in Lemma \ref{lemma:3.4} expressed $\K_i$ solely in terms of $\K_{i - 1}$, Lemma \ref{lemma:4.6} expresses $\Ein_i$ in terms of both $\Ein_{i - 1}$ and $\Ein_{i - 2}$. Therefore, exploiting these recurrence relations to build a dynamic programming algorithm requires us to maintain values from two previous iterations simultaneously, rather than just one as in Algorithm \ref{algo:3.5}. To this end, we maintain three arrays of length $k + 1$: \texttt{eberPrev} stores the Eberlein polynomials from iteration $i - 2$, \texttt{eberCurr} stores those from iteration $i - 1$, and \texttt{lambda} accumulates the partial sums to build up the eigenvalues. After each iteration, the old \texttt{eberCurr} becomes the new \texttt{eberPrev}, and the newly computed values become the new \texttt{eberCurr}. Of course, the base cases once again require special handling: for all $j$, we have
\[\mathcal{E}_0(j; n, k) = \sum_{t = 0}^0 (-1)^t \binom{j}{t} \binom{k - j}{0 - t} \binom{n - k - j}{0 - t} = 1,\]
and for $i = 1$, we can compute $\mathcal{E}_1(j; n, k) = \alpha_1 - \beta_j$ directly via Lemma \ref{lemma:4.6}(i).

Now in possession of all the necessary prerequisites, we present the algorithm for Johnson graph powers below, with the intention of adapting it afterwards for Kneser graph powers.

\begin{algo}\label{algo:4.7}
    Let $k \ge 1$, $n \ge k$, and $1 \le p \le \mathrm{diam}(J(n, k)) = \min\{k, n - k\}$. Using the same definitions of the $\alpha_i$'s and $\beta_j$'s from Lemma \ref{lemma:4.6}, we can compute the Hoffman bound for the Johnson graph power $J(n, k)^p$ as follows:
    {
        \normalfont
        \begin{enumerate}
            \item Initialize an array, \texttt{eberPrev}, of size $k + 1$ with all entries set to $1$.
            \item Initialize an array, \texttt{eberCurr}, of size $k + 1$.
            \item Initialize an array, \texttt{temp}, of size $k + 1$ (to facilitate updating \texttt{eberPrev} and \texttt{eberCurr}).
            \item Initialize an array, \texttt{lambda}, of size $k + 1$ with all entries set to $0$.
            \item For each $i = 1, 2, \ldots, p$:
                \begin{enumerate}
                    \item If $i = 1$:
                        \begin{enumerate}
                            \item For each $j = 0, 1, \ldots, k$:\\
                                \hspace*{\algorithmicindent}Set $\texttt{eberCurr}[j] \gets \alpha_1 - \beta_j$.
                        \end{enumerate}
                    \item Else (i.e., $i \ge 2$):
                        \begin{enumerate}
                            \item For each $j = 0, 1, \ldots, k$:\\
                                \hspace*{\algorithmicindent}Set $\texttt{temp}[j] \gets \frac{\alpha_i - \beta_j + (i - 1)^2}{i^2} \cdot \texttt{eberCurr}[j] - \frac{\alpha_{i - 1}}{i^2} \cdot \texttt{eberPrev}[j]$.
                            \item Copy the values in \texttt{eberCurr} into \texttt{eberPrev}.
                            \item Copy the values in \texttt{temp} into \texttt{eberCurr}.
                        \end{enumerate}
                    \item For each $j = 0, 1, \ldots, k$:\\
                        \hspace*{\algorithmicindent}Set $\texttt{lambda}[j] \gets \texttt{lambda}[j] + \texttt{eberCurr}[j]$.
                \end{enumerate}
            \item Let $\lambda_{\text{max}} \gets \texttt{lambda}[0]$.
            \item Let $\lambda_{\text{min}} \gets \min\limits_{1 \le j \le k} \texttt{lambda}[j]$.
            \item Return $1 + \frac{\lambda_{\text{max}}}{-\lambda_{\text{min}}}$ as a lower bound on $\chi(J(n, k)^p)$.
        \end{enumerate}
    }
    This algorithm requires $O(kp)$ time and $O(k)$ space.
\end{algo}

\begin{proof}
    Analogously to the verification of Algorithm \ref{algo:3.5}, correctness follows from Lemmas \ref{lemma:4.2}(i) and \ref{lemma:4.6}. We now justify our complexity claims, again assuming constant-time arithmetic and array access.
    
    The space complexity is clearly $O(k)$, as we allocate four arrays of size $k + 1$ in Steps 1 through 4. In terms of time complexity, Step 5 dominates with its outer loop executing precisely $p$ times. Within each iteration, Step 5(a) handles the $i = 1$ base case with $k + 1$ simple array updates and arithmetic operations, Steps 5(b)(i)--(iii) handle $i \ge 2$ with $k + 1$ updates plus two array copies of $k + 1$ elements each, and Step 5(c) again performs $k + 1$ constant-time operations. Across all $p$ iterations, this amounts to $O(kp)$ operations total, dominating the $O(k)$ initialization (Steps 1--4) and $O(k)$ cost of computing $\lambda_{\text{min}}$ (Step 7). Therefore, the overall time complexity is $O(kp)$, as desired.
\end{proof}

The algorithm for Kneser graph powers proceeds nearly identically to Algorithm \ref{algo:4.7}, with the key difference being that we must iterate through all Johnson distances from $1$ to $k$ (instead of from $1$ to $p$ specifically) to properly utilize Lemma \ref{lemma:4.6}(ii), regardless of which distances actually contribute to the Kneser graph power. As noted in Remark \ref{remark:4.4}, we first precompute the preimage $\delta^{-1}([p])$ in $O(k)$ time, then conditionally accumulate Eberlein polynomial values into the eigenvalue array only when the current distance $i$ belongs to said preimage.

\begin{algo}\label{algo:4.8}
    Let $k \ge 1$, $n > 2k$, and $1 \le p \le \mathrm{diam}(K(n, k)) = \left\lceil\frac{k - 1}{n - 2k}\right\rceil + 1$. Using the same definitions of the $\alpha_i$'s, the $\beta_j$'s, and $\delta_{n,k}$ from Lemmas \ref{lemma:4.6} and \ref{lemma:4.1}, we can compute the Hoffman bound for the Kneser graph power $K(n, k)^p$ as follows:
    {
        \normalfont
        \begin{enumerate}
            \item Compute $S \gets \{i \in \{1, 2, \ldots, k\} \mid 1 \le \delta_{n,k}(i) \le p\}$ (the preimage $\delta_{n,k}^{-1}([p])$).
            \item Initialize an array, \texttt{eberPrev}, of size $k + 1$ with all entries set to $1$.
            \item Initialize an array, \texttt{eberCurr}, of size $k + 1$.
            \item Initialize an array, \texttt{temp}, of size $k + 1$ (to facilitate updating \texttt{eberPrev} and \texttt{eberCurr}).
            \item Initialize an array, \texttt{lambda}, of size $k + 1$ with all entries set to $0$.
            \item For each $i = 1, 2, \ldots, k$:
                \begin{enumerate}
                    \item If $i = 1$:
                        \begin{enumerate}
                            \item For each $j = 0, 1, \ldots, k$:\\
                                \hspace*{\algorithmicindent}Set $\texttt{eberCurr}[j] \gets \alpha_1 - \beta_j$.
                        \end{enumerate}
                    \item Else (i.e., $i \ge 2$):
                        \begin{enumerate}
                            \item For each $j = 0, 1, \ldots, k$:\\
                                \hspace*{\algorithmicindent}Set $\texttt{temp}[j] \gets \frac{\alpha_i - \beta_j + (i - 1)^2}{i^2} \cdot \texttt{eberCurr}[j] - \frac{\alpha_{i - 1}}{i^2} \cdot \texttt{eberPrev}[j]$.
                            \item Copy the values in \texttt{eberCurr} into \texttt{eberPrev}.
                            \item Copy the values in \texttt{temp} into \texttt{eberCurr}.
                        \end{enumerate}
                    \item If $i \in S$:
                        \begin{enumerate}
                            \item For each $j = 0, 1, \ldots, k$:\\
                                \hspace*{\algorithmicindent}Set $\texttt{lambda}[j] \gets \texttt{lambda}[j] + \texttt{eberCurr}[j]$.
                        \end{enumerate}
                \end{enumerate}
            \item Let $\lambda_{\text{max}} \gets \texttt{lambda}[0]$.
            \item Let $\lambda_{\text{min}} \gets \min\limits_{1 \le j \le k} \texttt{lambda}[j]$.
            \item Return $1 + \frac{\lambda_{\text{max}}}{-\lambda_{\text{min}}}$ as a lower bound on $\chi(K(n, k)^p)$.
        \end{enumerate}
    }
    This algorithm requires $O(k^2)$ time and $O(k)$ space.
\end{algo}

\begin{proof}
    Again, correctness follows from Lemmas \ref{lemma:4.2}(ii) and \ref{lemma:4.6}. With regard to complexity, the algorithm is nearly identical to Algorithm \ref{algo:4.7}, with the only main differences being that Step 1 precomputes $\delta_{n,k}^{-1}([p])$ in $O(k)$ time, and that Step 6(c) conditionally accumulates into \texttt{lambda} only when $i \in \delta_{n,k}^{-1}([p])$. Since Step 6 iterates through all $i$ from $1$ to $k$, we have $k$ iterations in the outer loop, each performing $O(k)$ work and thus yielding $O(k^2)$ time complexity regardless of the value of $p$. The space complexity still remains $O(k)$ from the four arrays of size $k + 1$ allocated in Steps 1--5.
\end{proof}

As with Algorithm \ref{algo:3.5} for Hamming graph powers, Algorithms \ref{algo:4.7} and \ref{algo:4.8} both achieve time complexities that are independent of $n$, which is even more striking this time around given that both Johnson and Kneser graphs grow exponentially in vertex count as $n$ increases. Algorithm \ref{algo:4.7} achieves $O(kp)$ time complexity for Johnson graph powers, which becomes $O(k^2)$ in the worst case when $p$ approaches $\min\{k, n - k\}$. In contrast, Algorithm \ref{algo:4.8} requires $O(k^2)$ time regardless of the value of $p$, since we must compute Eberlein polynomials up to index $k$ even when $\delta_{n,k}^{-1}([p])$ contains fewer than $k$ elements. In the end, both algorithms represent dramatic computational advantages over na\"ive methods, analogous to the improvements achieved for Hamming graph powers in Section \ref{section:3}.

\section{Conclusion and future work}\label{section:5}

Motivated by applications of (powers of) Hamming, Johnson, and Kneser graphs (especially the former two) to coding theory and distributed computing, as well as the general mathematical significance of these objects, we investigated methods of efficiently computing \cite{Hof03}'s eigenvalue-based chromatic lower bound for $H(n, q)^p$, $J(n, k)^p$, and $K(n, k)^p$ using recurrence relations between associated HOPs. In particular, we used Kravchuk and Eberlein polynomials to compute the adjacency eigenvalues of $H(n, q)^p$, $J(n, k)^p$, and $K(n, k)^p$ in $O(np)$, $O(kp)$, and $O(k^2)$ time, respectively---a marked improvement over na\"ive $O(q^{3n})$ and $O\bigl(\binom{n}{k}^3\bigr)$ approaches like SVD or QRD. As such, the present work represents the first nontrivial, computationally feasible chromatic lower bounds for powers of Hamming, Johnson, and Kneser graphs.

For future work, we consider leveraging our efficient eigenvalue computations to calculate more recent spectral bounds on $\chi$, such as \cite{WE13}'s (which uses all adjacency eigenvalues, not just the minimum and maximum), and comparing the tightness of these bounds to the Hoffman bounds for Hamming, Johnson, and Kneser graph powers. There also exist spectral bounds on other graph invariants for which our efficient eigenvalue computations might prove useful. Among others, Delsarte's linear programming bound on the independence number \cite[p. 31]{Del73} and more recent bounds on the fractional chromatic number \cite{GS24} may be relevant here. It may also be worthwhile to investigate other highly symmetric, vertex-transitive graphs for which computationally practical chromatic lower bounds are not known, namely those with corresponding association schemes whose basis matrices have eigenvalues expressible in terms of HOPs. Natural candidates include Grassmann graphs, which form another prominent family of graphs with known applications to coding theory \cite{Mog22} and whose adjacency eigenvalues are given by $q$-Hahn polynomials.

Finally, we remark that our results can be understood within a broader representation-theoretic framework. Although we work exclusively with association schemes in this paper, the deeper structure underlying our approach is the presence of multiplicity-free permutation representations---for Hamming graph powers, the wreath product $S_q \wr S_n$ acting on $\Z_q^n$ (as briefly alluded to in Remark \ref{remark:3.2}), and for Johnson and Kneser graph powers, the symmetric group $S_n$ acting on $\binom{[n]}{k}$. (See \cite{CSST08} for more background on the theoretical foundations underpinning these structures.) Such representations ensure that the associated Bose--Mesner algebras are commutative, which in turn guarantees that their basis matrices are simultaneously diagonalizable (a convenient property of which we repeatedly make use). The association scheme framework we have used thus far guarantees these properties, but they may arise more generally from transitive group actions. This motivates the question of whether graphs with multiplicity-free automorphism groups, even those without distance regularity or complete association schemes, might similarly have adjacency eigenvalues conveniently expressible in terms of HOPs.

\section*{Acknowledgements}

F.A.S. thanks Rajesh Pereira for many helpful conversations regarding
algebraic combinatorics and Jack Bedard for discussions about general
graph theory. L.M.B.V. thanks Nathaniel Johnston and Laurie Ricker
for general advice, Liam Keliher for helpful conversations regarding
combinatorics, and Tanner Altenkirk for feedback on the paper's structure.

\bibliographystyle{amsalpha}
\bibliography{refs}

\end{document}